\renewcommand{\cite}{\citet}
\newtheoremstyle{mystyle}%                % Name
  {}%                                     % Space above
  {}%                                     % Space below
  {}%                                     % Body font
  {}%                                     % Indent amount
  {}%                                     % Theorem head font
  {.}%                                    % Punctuation after theorem head
  { }%                                    % Space after theorem head, ' ', or \newline
  {{\underline{\thmname{#1}\thmnumber{ #2}~\thmnote{(#3)}}}}%                                     % Theorem head spec (can be left empty, meaning `normal')
\newtheoremstyle{mystyle2}%                % Name
  {}%                                     % Space above
  {}%                                     % Space below
  {\itshape}%                                     % Body font
  {}%                                     % Indent amount
  {}%                                     % Theorem head font
  {.}%                                    % Punctuation after theorem head
  { }%                                    % Space after theorem head, ' ', or \newline
  {{\underline{\thmname{#1}\thmnumber{ #2}~\thmnote{(#3)}}}}%                                     % Theorem head spec (can be left empty, meaning `normal')
\theoremstyle{definition}
\theoremstyle{mystyle}
\newtheorem{defi}{Definition}[section]
\newtheorem{rmk}[defi]{Remark}
\newtheorem{lemma}[defi]{Lemma}
\newtheorem{example}{Example}
\theoremstyle{plain}
\theoremstyle{mystyle2}
\newtheorem{thm}[defi]{Theorem}
\newtheorem{prop}[defi]{Proposition}
\newtheorem{cor}[defi]{Corollary}
\newcommand{\R}{\mathbb{R}}
\newcommand{\N}{\mathbb{N}}
\newcommand{\Z}{\mathbb{Z}}
\newcommand{\p}{\mathbb{P}}
\newcommand{\E}{\mathbb{E}}
\newcommand{\var}{\mathrm{Var}}
\newcommand{\Var}{\mathrm{Var}}
\newcommand{\e}{\mathrm{e}}
\newcommand{\dd}{\mathrm{d}}
\newcommand{\1}{\mathbbm{1}}
\newcommand{\s}{\mathscr{S}}
\newcommand{\indot}{\boldsymbol{\cdot}}
\newcommand{\epsi}{\ensuremath{\epsilon}}
\newcommand{\si}{\ensuremath{\sigma}}
\newcommand{\beq}{\begin{eqnarray}}
\newcommand{\eeq}{\end{eqnarray}}
\newcommand{\be}{\begin{equation}}
\newcommand{\ee}{\end{equation}}
\newcommand{\pee}{\ensuremath{\mathbb{P}}}
\newcommand{\br}{\begin{rmk}}
\newcommand{\er}{\end{rmk}}
\newcommand{\bt}{\begin{thm}}
\newcommand{\et}{\end{thm}}
\newcommand{\bd}{\begin{definition}}
\newcommand{\ed}{\end{definition}}
\newcommand{\bp}{\begin{prop}}
\newcommand{\ep}{\end{prop}}
\newcommand{\bc}{\begin{cor}}
\newcommand{\ec}{\end{cor}}
\newcommand{\bpr}{\begin{proof}}
\newcommand{\epr}{\end{proof}}
\newcommand{\bi}{\begin{itemize}}
\newcommand{\ei}{\end{itemize}}
\newcommand{\ben}{\begin{enumerate}}
\newcommand{\een}{\end{enumerate}}
\newcommand{\omu}{\overline{\mu}}
\newcommand{\homu}{\widehat{\mu}}
\newcommand{\caN}{{\mathcal N}}
\newcommand{\Cov}{\mathrm{Cov}}
\newcommand{\sym}{\mathrm{sym}}
\newcommand{\asym}{\mathrm{asym}}
\newcommand{\mesh}{\mathrm{mesh}}
\title{Run-and-tumble motion: the role of reversibility}
\author{Bart van Ginkel\footnote{G.J.vanGinkel@tudelft.nl} \and Bart van Gisbergen\footnote{B.L.vanGisbergen@student.tudelft.nl} \and Frank Redig\footnote{F.H.J.Redig@tudelft.nl}}
\date{%
    Delft Institute of Applied Mathematics\\
    \today
}
\begin{document}

\maketitle

\begin{abstract}
    We study a model of active particles that perform a simple random walk and on top of that have a preferred direction determined by an internal state which is modelled by a stationary Markov process. First we calculate the limiting diffusion coefficient. Then we show that the `active part' of the diffusion coefficient is in some sense maximal for reversible state processes. Further, we obtain a large deviations principle for the active particle in terms of the large deviations rate function of the empirical process corresponding to the state process. Again we show that the rate function and free energy function are (pointwise) optimal for reversible state processes. Finally, we show that in the case with two states, the Fourier-Laplace transform of the distribution, the moment generating function and the free energy function can be computed explicitly. Along the way we provide several examples.
\end{abstract}
\section{Introduction}\label{sec:introduction}
In this paper we study run-and-tumble motion, which is often used as a model of active particles.
The particle motion has two ingredients: first the particle performs a symmetric random walk, and second, independently it moves in a direction dictated by an internal state process.
This internal state process is assumed to be a continuous-time stationary Markov process.
%This type of model falls under several different (but of course related) subfields of (mainly, but not only) mathematics and physics. 
In the sequel we will first describe how our paper relates to various results on run-and-tumble particles in the literature. Next, we will briefly sketch how our model relates to the broader literature on active matter, stochastic slow-fast systems and directionally reinforced random walks.

\subsection{Model and contributions of this paper}
The model that we study in this paper is an instance of what is more generally called run-and-tumble motion. These are models of particles that follow a preferred direction which is reversed at random points in time. Recent articles include \cite{grossmann2016diffusion}, \cite{demaerel2018active}, \cite{malakar2018steady}, \cite{le2019noncrossing}, \cite{dhar2019run} and \cite{garcia2020run}.

We study an active particle of which the state process (that determines the preferred direction) is a stationary Markov process (under some technical assumptions), started from its unique ergodic measure. Then our main contribution is twofold. First we are able to calculate closed form formulas for the limiting diffusion coefficient of the active particle. This formula holds in great generality, including also the case where the state process is a diffusion (we will provide examples where an Ornstein-Uhlenbeck process or Brownian motion on a circle form the state process). In this formula we can interpret the different terms and observe where the activity is manifested. We also calculate the large deviations free energy function and rate function in the case where the state process has a finite state space.

Second, we study the role of reversibility of the state process in the diffusion coefficient and large deviations of the active particle (again for finite state spaces). In particular, we show that reversible processes in some sense optimize those quantities. To be more precise, we show that among all processes with the same symmetric part and the same stationary measure, the reversible process maximizes the diffusion coefficient and the free energy function (pointwise) and minimizes the large deviations rate function (also pointwise). The last two results are obtained by showing a pointwise inequality for the Donsker-Varadhan rate function of the empirical processes corresponding to the reversible and non-reversible state processes, respectively.

The calculations that we present are for an active particle in $\R$, but we explain for all of our results how they generalize to $\R^d$ and we also provide the explicit formulas in the $\R^d$ setting.

\subsection{Context and related literature}
First of all, the run-and-tumble motion is often used as a model of active matter. 
%This topic encompasses systems of elements that in some sense use energy on an individual level. This is a very broad field with many different subfields in physics and biology. This is not the right place to give an overview, but we do want to point out the relation with active matter.
As we said before, our active particle performs a symmetric random walk and a random walk with preferred directions that are switched. The part of the motion that follows the internal state is called the active part of the motion, because for the switching between internal states some internal source of energy is needed. The passive part of the motion is the symmetric random walk part and comes from collisions with surrounding molecules.

Note that active particles should not be confused with activated random walk. In those models particles perform random walks, but fall asleep after a random time and are awakened (activated) when other particles jump to their position.

Second, the active particle motion studied in this paper is an example of a stochastic slow-fast system. These are well-studied systems where coupled quantities evolve on different time scales. If one rescales the position of the active particle diffusively, the underlying state process behaves as a fast process and the (rescaled) particle position is a slow process. Asymptotically the fast state process averages out and has a deterministic influence on the slow process: the limiting diffusion coefficient will depend on the state process only through the stationary distribution and the covariance function. For an introduction to stochastic slow-fast systems see for instance~\cite{berglund2006noise}. The large deviation results that we obtain are related to more general results for large deviations of slow-fast systems that were studied in for instance~\cite{freidlin1998random} or, more recently, in~\cite{kraaij2020large}.

Third, the active particle motion studied in this paper has strong similarities with a directionally reinforced random walk. This model was first studied by~\cite{coppersmith1987random} and a multidimensional version in~\cite{mauldin1996directionally}. Then in \cite{horvath1998limit} and (in a more general context)~\cite{ghosh2014directionally} it was shown for a process of this type that it converges to a multidimensional Brownian motion when rescaled diffusively.

Also we will compare the diffusion coefficients and large deviations rate functions for active particles with state processes that are either reversible or non-reversible with respect to the same invariant measure. In particular we will show that the Donsker-Varadhan rate function of reversible processes is dominated by the rate functions of non-reversible processes with the same symmetric part and the same invariant measure. A similar result (in a different context) was obtained in~\cite{pinsky1985function}.

\subsection{Structure of this paper}
 In Section~\ref{sec:prelim}, we introduce the active particle process as a stochastic integral. We split it into a random walk part, a martingale part and an active part.%: an additive functional of a Markov process.

In Section~\ref{sec:calcdifcoef}, we obtain the limiting diffusion coefficient of the active particle and show that it is the sum of the contributions of the random walk part, the martingale part and the active part. Then we generalize the formulas to the multidimensional case. The limiting diffusion coefficient (or matrix) is then calculated for several concrete examples, both with finite and with infinite state spaces.  Finally, we sketch how one obtains a Central Limit Theorem for the active particle.

Next, in Section~\ref{sec:revopt}, we restrict ourselves to finite space spaces and study the active part of the diffusion coefficient, which is proportional to an inner product with the inverse of the generator of the state process. We show that among all stationary processes with respect to the same invariant measure and with equal symmetric part, the active part of the diffusion coefficient is maximal for the reversible process. We use the 1-dimensional case to show that this also holds for the active part of the diffusion matrix in higher dimensions.

Then in Section~\ref{sec:largedev}, we move to large deviations (still for finite state spaces). We compute the large deviations free energy function. Using Varadhan's lemma, we derive an expression for the free energy function of the active particle in terms of the Donsker-Varadhan rate function for the empirical process corresponding to the state process (which in turn gives us the large deviations rate function as the Legrende transform of the free energy). We show that the free energy function is maximal and the rate function is minimal in the reversible case (similar to the situation for the diffusion coefficient) by showing that Donsker-Varadhan rate function is maximal for reversible processes.

We conclude the paper in Section~\ref{sec:twostates} with an analysis of the situation where the state space is $\{\pm1\}$. In this two-state case we can explicitly calculate the Fourier-Laplace transform of the distribution of the active particle process, the moment generating function and the large deviations free energy function.

\section{Preliminaries}\label{sec:prelim}

In this section we introduce the model and goal of this paper. First, in Section~\ref{subsec:informal} we describe in words the models we study and formulate in words the main results. Then in Section~\ref{subsec:mathematical}, the definitions will be repeated with more mathematical details and precise assumptions. In Section~\ref{subsec:informal} we will also describe the basic example where the internal state space is $\{-1,1\}$, more examples will follow in Section~\ref{subsec:examples}.

\subsection{Informal description of the model and main results}\label{subsec:informal}
In the models we consider a particle that moves on $\R^d$ in continuous time. The particle has a position at time $t\geq 0$ denoted by $X_t$, and an ``internal state'' denoted by $M_t$. The internal state is assumed to evolve according to a stationary Markov process, and  can model e.g. a chemical state of a molecular motor. The active part of the motion is driven by this internal state. The simplest setting is e.g. when the internal state takes the values $\pm 1$ and determines whether the particle drifts to the right or left.

Let us now first describe the joint motion of the position and the internal state $(x,m)$ in the simplest setting where the particle moves on the discrete lattice $\Z$ and has internal state $m\in \{-1,1\}$. The motion consists of three parts.
\begin{itemize}
    \item[a)] At rate $\kappa$ the particle makes a random walk jump, i.e., $(x,m)$ moves to $(x\pm 1, m)$. This motion models the ``passive'' part of the motion, caused by collisions with surrounding molecules.
    \item[b)] At rate $\lambda$ the particle jumps according to its internal state, i.e., $(x,m)$ moves to $(x+m,m)$. This corresponds to the active part of the motion, driven by an internal energy source (such as ATP-ADP conversion).
    \item[c)] At rate $\gamma$ the internal state flips, i.e., $(x,m)$ moves to $(x,-m)$.
\end{itemize}

Denoting $\mu(x,m;t)$ the probability to be at position $x\in \Z$, and internal state $m\in \{-1,1\}$ the above verbal description  of the process is then summarised via the master equation
\begin{eqnarray}
    \frac{d\mu(x,m;t)}{dt} &= &\kappa (\mu(x+1,m;t)+\mu(x-1,m;t)-2\mu(x,m;t)) \nonumber\\
    &+& \lambda (\mu(x-m,m;t)-\mu(x,m;t)) + \gamma(\mu(x,-m;t)-\mu(x,m;t))\nonumber
\end{eqnarray}
or alternatively via the generator working on functions from the state space $\Z\times \{-1,1\}$
\begin{eqnarray}
    L f(x,m) &=& \kappa (f(x+1,m) + f(x-1,m)-2f(x,m))\nonumber\\
    &+& \lambda (f(x+m,m)-f(x,m)) +\gamma( f(x,-m)- f(x,m)).\nonumber
\end{eqnarray}
The idea is now to generalise this simple setting, i.e, the motion of the particle is on $\R^d$ and  we will allow much more general internal state processes (the precise assumptions on them are in the subsection below) including e.g. diffusion processes such as the Ornstein-Uhlenbeck process. 
In this more general setting, the active part of the motion consists in jumping according to the vector $v(m)$ determined by the internal state $m$, and the internal state is a general stationary ergodic Markov process, whereas the random walk part of the motion remains unchanged. 
This implies that the generator is of the form
\begin{eqnarray}
    L f(x,m) &=& \kappa (f(x+1,m) + f(x-1,m)-2f(x,m))\nonumber\\
    &+& \lambda (f(x+ v(m),m)-f(x,m)) +\gamma Af(x, \cdot)(m)\nonumber
\end{eqnarray}

where $A$ is the generator of the internal state process. Notice that this form of the generator implicitly assumes that the internal state dynamics is not depending on the particle's position. Moreover, we assume that there is no ``global'' drift in the active part of the motion, i.e., the average of $v(m)$ over the stationary distribution of the internal state process is assumed to be zero. Note that the active particle with internal state space $\{-1,1\}$ in the simple setting above fits into this framework by letting $v$ be the identity function on $\{-1,1\}$.

Our main interest is then in the asymptotic behavior of the position $X_t$, more precisely we will prove the following:
\begin{enumerate}
    \item Diffusive scaling limit for $X_t$, with explicit expressions for the diffusion matrix $D$, i.e., in the scaling limit
    \[
    \frac{1}{\sqrt N} X_{tN}\to  \sqrt{D}W(t), \hspace{5mm} N\to\infty
    \]
    where $W$ denotes Brownian motion, and where $D$ denotes the diffusion matrix.
    \item Large deviations for the position $X_t, t\geq 0$, i.e., in the sense of large deviations
    \[
    \pee \left(\frac{X_t}{t} \approx x\right) \approx e^{-t I(x)}
    \]
    with $I(x)$ the large deviation rate function.
\end{enumerate}

We then focus on the question how  the diffusion matrix as well as the large deviation rate depend on the internal state process, more precisely on its generator $A$. We show that both quantities are optimised (i.e., the diffusion matrix is maximal and the rate function is minimal) for reversible internal state space processes.

More precisely, when the stationary distribution $\mu$ of the internal state process is fixed, as well the reversible part of the dynamics, then we show that the diffusion matrix is maximal as the rate function is minimal for the internal state process for which $\mu$ is reversible, i.e., when the asymmetric part of the dynamics is zero. Though we do not have a simple intuitive ``physics'' argument for this result, it corresponds to the general intuition that non-reversible processes converge faster to their stationary state, and therefore allow less fluctuations, resulting in a smaller rate function (and larger diffusion constant) in the reversible setting.

\subsection{Mathematical definitions}\label{subsec:mathematical}

We consider the position $(X_t,t\geq 0)$ of a particle that moves in continuous time and space (see also Remark~\ref{rem:Xcont}). For now we assume $X_t\in \R$, but we will generalize to $\R^d$ later. The particle has the following dynamics.
\begin{itemize}
    \item[a)] With rate $2\kappa$ the particle performs a simple symmetric random walk.
    \item[b)] Independently, with rate $\lambda$ the particle jumps in a preferred direction indicated by an inner state. If such jump occurs at time $t$, the particle jumps from $X_t$ to $X_t+v_t^\gamma$.
    \item[c)] This internal state evolves with `rate' $\gamma$ according to a stationary Markov process.
\end{itemize}
Because of the jump to a preferred direction based on the inner state, we call the particle an \textit{active particle}.

To make this more precise we make the following definitions. We will assume that the processes in the coming definitions are jointly defined on a probability space $(\Omega,\mathscr{F},\p)$. 
\begin{itemize}
    \item[i)] Random walk part. Let $Y=(Y_t,t\geq 0)$ be a simple symmetric random walk, i.e. a random walk that starts from the origin ($Y_0=0$), jumps with rate $1$ and jumps $1$ to the left or to the right with equal probability. Fix a constant $\kappa>0$. Then the random walk part of the process is $Y_{2\kappa t}$.
    \item[ii)] Internal state process. Let $(M_t,t\geq 0)$ be a stationary Markov process (independent of the random walk) on a state space $\s$ with ergodic measure $\mu$. We will call this process the \textit{state process}. Since we will always start $M$ from $\mu$, we can assume without loss of generality that $\mu$ is the unique ergodic (and hence the unique invariant) measure of $M$. Denote by $(S_t,t\geq 0)$ and $A$ the corresponding semigroup and Markov generator on $L^2(\mu)$, respectively, and denote the inner product on $L^2(\mu)$ by $(\cdot,\cdot)$ and the corresponding norm by $\|\cdot\|$.
    \item[iii)] Speed function. Let $v$ be an element of $L^2(\mu)$. We will call $v$ the \textit{speed function}. For simplicity, we assume that $\int v\dd \mu=0$, meaning that the average of the speed with respect to the stationary measure on the internal state space is $0$. This is not essential though, we will make some remarks on what happens without this assumption. The idea is that $v:\s\rightarrow \R$ is a mapping that indicates for each internal state the jump vector in case of an active jump when the particle has that internal state. In the example in Section~\ref{subsec:informal}, the speed function $v$ was just the identity function on $\{-1,1\}$. In Section~\ref{subsec:examples} we will see more examples, for instance where $v$ maps three internal states to three numbers that sum to $0$ (in Example~\ref{ex:threestates}) or where $v$ is the sine function (in Example~\ref{ex:sinBt}). 
    \item[iv)] Speed process. Fix a constant $\gamma>0$. We define $v^\gamma_t = v(M_{\gamma t})$ and call $(v^\gamma_t,t\geq 0)$ the \textit{speed process}. Note that this speed process does not need to be a Markov process. In the special case for $\gamma=1$, we will simply write $v_t$. Note that $(v^\gamma_t,t\geq 0)$ is the process $(v_t,t \geq 0)$ speeded up by the factor $\gamma$. We make the following two technical assumptions on the speed process. 
    \begin{itemize}
        \item[a)] First we assume that
\begin{equation}\label{eq:assumptionlimst}
    \lim_{t\rightarrow\infty} \int_0^t S_rv\dd r \hspace{0.5cm}\text{exists in } L^2(\mu).
\end{equation}
This implies that the limit $u:=\int_0^\infty S_tv\dd t$ satisfies $u\in D(A)$ and $-A u = v$, so we will write $\int_0^\infty S_tv\dd t = -A^{-1}v$. We need this assumption to ensure that the limiting variance is finite. If it does not hold, there may not be a diffusive scaling limit. Sufficient conditions for Assumption~\eqref{eq:assumptionlimst} are for instance that the spectral gap of $A$ is positive or that there exist $c,C>0$ such that
\begin{equation*}
    \|S_tv\|\leq C \e^{-ct}.% \hspace{0.5cm} \text{for } v\perp \1.
\end{equation*}
The latter is a condition on the speed of relaxation, it ensures that the internal state process reaches equilibrium fast enough, which avoids large temporal covariances.
In any case, Assumption~\eqref{eq:assumptionlimst} requires that $S_tv$ goes to $0$ fast enough that it is integrable.
    \item[b)] The second assumption is that for all $t>0$
\begin{equation}\label{eq:assumptioncontL2}
    \lim_{\delta\downarrow 0} \sup_{\substack{0\leq s,s'\leq t\\ |s-s'|<\delta}} \E[(v_s-v_{s'})^2] = 0.
\end{equation}
In other words: the speed process must be uniformly continuous in $L^2$. This assumption is purely technical, we will use it in Lemma~\ref{lemma:L2approxintegral} to show that the integral in~\eqref{eq:Xtdefinition} is well-defined.
    \end{itemize}
Both of these assumptions are automatically satisfied in the case that the state space $\s$ of $M$ is finite. Other internal state processes that satisfy these assumptions (with a suitable choice of $v$) include diffusion processes such as Brownian motion and the Ornstein-Uhlenbeck processes that we encounter in the examples in Section~\ref{subsec:examples}.

\item[v)] Active jumps. Finally, fix a constant $\lambda>0$ and let $(N_t,t\geq 0)$ be a Poisson process with rate $\lambda$ (independent of the random walk and the state process). This process marks the times at which the particle jumps in a preferred direction.
\end{itemize}

With these components we can define
\begin{equation}\label{eq:Xtdefinition}
    X_t = Y_{2\kappa t} + \int_0^t v_s^\gamma \dd N_s,
\end{equation}
where the integral is defined as a limit in $L^2(\p)$ (see in Lemma~\ref{lemma:L2approxintegral} how the well-definedness of the integral follows from Assumption~\eqref{eq:assumptioncontL2}). This expression matches with our description above: $Y_{2\kappa t}$ is the random walk part and on top of that whenever the Poisson process $N$ has a jump at time $t$, say, the number $v_t^\gamma$ is added to $X_t$. Note that~\eqref{eq:Xtdefinition} implies that $X_0=0$. Also, we can write~\eqref{eq:Xtdefinition} as
\begin{equation}\label{eq:threeparts}
    X_t = Y_{2\kappa t} + \int_0^t v_s^\gamma \dd \overline{N}_s + \lambda \int_0^t v_s^\gamma \dd s,
\end{equation}
where $\overline N_t = N_t-\lambda t$ is a compensated Poisson process.
We call the first, second and third term of~\eqref{eq:threeparts} the \textit{random walk part}, the \textit{martingale part} and the \textit{active part}, respectively. This division will become more clearly visible in the diffusion coefficient.

\begin{rmk}\label{rem:Xcont}
Note that if $v$ is integer-valued, $X_t$ stays in the lattice $\Z$. In case $v$ is not integer-valued, we can also directly consider a continuous process and define
\begin{equation}
    X^c_t = B_{2\kappa t} + \lambda \int_0^t v_s^\gamma \dd s,
\end{equation}
where $(B_t,t\geq 0)$ is Brownian motion (independent of the state process) and where the speed process is followed continuously in time. As will become clear later, the change to Brownian motion is mostly aesthetic. However, the change from $\dd N_t$ to $\lambda \dd t$ leaves out the martingale part of $X_t$, which will have consequences for both the limiting diffusion coefficient and for the large deviations. We will makes remarks on this later, after the results concerned.
\end{rmk}
\section{Diffusion coefficient}\label{sec:calcdifcoef}
A first observation is that the expectation of $X_t$ is $0$. Indeed, using independence of the processes $v^\gamma_s$ and $N_s$ and the fact that $\E v^\gamma_s=0$, we compute
\begin{equation*}
    \E X_t = \E Y_{2\kappa t} + \E \int_0^t v_s^\gamma \dd N_s = 0 + \lim_{n\rightarrow\infty} \sum_{i=0}^{n-1} \E [v_s^\gamma (N_{s_{i+1}}-N_{s_i})] = \lim_{n\rightarrow\infty} \sum_{i=0}^{n-1} \E v^\gamma_{s_i} \lambda(s_{i+1}-s_i) = 0.
\end{equation*}
In this section we determine the limiting diffusion coefficient of the active particle and extend this result to active particles in higher dimensions. Then we provide some examples. Finally, we discuss the invariance principle.

\subsection{Calculating the diffusion coefficient}
As a first result, we compute the limiting variance of the position of the active particle.

\subsubsection{The \texorpdfstring{$1$}{1}-dimensional case}
We start in dimension $1$. Recall that $(\cdot,\cdot)$ denotes the inner product on $L^2(\mu)$.
\begin{thm}\label{thm:limdifcoef}
The active particle has the following limiting diffusion coefficient
\begin{equation}\label{eq:difcoef}
    \lim_{t\rightarrow\infty}\frac{\var(X_t)}{t} = 2\kappa + \lambda\int v^2\dd\mu + \frac{2\lambda^2}{\gamma}(v,-A^{-1}v).
\end{equation}
\end{thm}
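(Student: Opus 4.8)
The plan is to compute $\var(X_t)$ directly from the decomposition~\eqref{eq:threeparts}, which writes $X_t$ as the sum of a random walk part $Y_{2\kappa t}$, a martingale part $\int_0^t v_s^\gamma\,\dd\overline N_s$, and an active part $\lambda\int_0^t v_s^\gamma\,\dd s$. Since $\E X_t=0$, it suffices to expand $\E[X_t^2]$, and the natural first move is to argue that the three parts contribute additively to the variance, i.e.\ that the cross terms either vanish or are negligible after dividing by $t$. The random walk $Y$ is independent of everything else and centred, so its cross terms with the other two parts vanish exactly. For the cross term between the martingale part and the active part, I would condition on the entire trajectory of the state process $(M_s)$: given that trajectory, $v_s^\gamma$ is deterministic and $\int_0^t v_s^\gamma\,\dd\overline N_s$ is a centred integral against the compensated Poisson process (independent of $M$), so the conditional expectation of the product is zero. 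Hence the variance splits as the sum of the three individual variances.

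Next I would evaluate each piece. The random walk part gives $\var(Y_{2\kappa t})=2\kappa t$, since a rate-$1$ symmetric walk run for time $2\kappa t$ has variance $2\kappa t$; dividing by $t$ yields the $2\kappa$ term. For the martingale part I would use the It\^o isometry for integration against a compensated Poisson process, again conditioning on $M$: $\E\big[(\int_0^t v_s^\gamma\,\dd\overline N_s)^2\big]=\lambda\int_0^t \E[(v_s^\gamma)^2]\,\dd s=\lambda\int_0^t \E[v(M_{\gamma s})^2]\,\dd s$. Because $M$ is stationary with invariant measure $\mu$, $\E[v(M_{\gamma s})^2]=\int v^2\,\dd\mu$ for every $s$, so this term equals $\lambda t\int v^2\,\dd\mu$, giving the second term after dividing by $t$.

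The active part is the substantive one. Here
\[
\var\!\Big(\lambda\int_0^t v_s^\gamma\,\dd s\Big)=\lambda^2\int_0^t\!\int_0^t \E[v_s^\gamma v_{s'}^\gamma]\,\dd s\,\dd s'.
\]
By stationarity and the Markov property, $\E[v_s^\gamma v_{s'}^\gamma]=\E[v(M_{\gamma s})v(M_{\gamma s'})]=(v,S_{\gamma|s-s'|}v)$, the temporal covariance expressed through the semigroup. Substituting and using symmetry of the integrand in $(s,s')$, the double integral becomes $2\lambda^2\int_0^t(t-r)(v,S_{\gamma r}v)\,\dd r$ after the change of variables $r=s-s'$. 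Dividing by $t$ and letting $t\to\infty$, the $(t-r)/t$ factor tends to $1$ pointwise, and Assumption~\eqref{eq:assumptionlimst}, which guarantees $\int_0^\infty S_t v\,\dd t=-A^{-1}v$ exists in $L^2(\mu)$, lets me pass to the limit to get $2\lambda^2\int_0^\infty(v,S_{\gamma r}v)\,\dd r$. A final change of variables $u=\gamma r$ turns this into $\tfrac{2\lambda^2}{\gamma}(v,\int_0^\infty S_u v\,\dd u)=\tfrac{2\lambda^2}{\gamma}(v,-A^{-1}v)$, which is exactly the third term.

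The main obstacle is the limit interchange in the active part: justifying $\lim_{t\to\infty}\tfrac1t\int_0^t(t-r)(v,S_{\gamma r}v)\,\dd r=\int_0^\infty(v,S_{\gamma r}v)\,\dd r$ and controlling the tail. This needs Assumption~\eqref{eq:assumptionlimst} to supply absolute integrability of $r\mapsto(v,S_{\gamma r}v)$ (via Cauchy--Schwarz, $|(v,S_{\gamma r}v)|\le\|v\|\,\|S_{\gamma r}v\|$, with the relevant integrability coming from the existence of the $L^2$ limit), after which dominated convergence or a direct splitting of $\int_0^t(t-r)/t\cdot(\cdots)$ closes the argument. I would also lean on Assumption~\eqref{eq:assumptioncontL2} and Lemma~\ref{lemma:L2approxintegral} to ensure all the stochastic integrals, and hence the variance computations, are well defined as $L^2(\p)$ limits of Riemann sums, so that the termwise manipulations above are legitimate.
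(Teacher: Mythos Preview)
Your proposal is correct and follows essentially the same route as the paper: split $X_t$ into the three parts of~\eqref{eq:threeparts}, argue the variance decomposes additively, and evaluate each piece separately, with the active part handled via the Green--Kubo computation $\var(\lambda\int_0^t v_s^\gamma\,\dd s)=2\lambda^2\int_0^t(t-r)(v,S_{\gamma r}v)\,\dd r$ and Assumption~\eqref{eq:assumptionlimst}. The only substantive differences are cosmetic: where you condition on the trajectory of $M$ to kill the cross term and to get the It\^o isometry for the martingale part, the paper works directly with the Riemann-sum approximations from Lemma~\ref{lemma:L2approxintegral}; both arguments are valid and equally short.

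One small technical caveat: your claim that Assumption~\eqref{eq:assumptionlimst} yields \emph{absolute} integrability of $r\mapsto(v,S_{\gamma r}v)$ via Cauchy--Schwarz is not quite right, since convergence of $\int_0^t S_r v\,\dd r$ in $L^2(\mu)$ does not in general force $\int_0^\infty\|S_r v\|\,\dd r<\infty$. The paper instead uses only that $\int_0^t(v,S_r v)\,\dd r=(v,\int_0^t S_r v\,\dd r)$ converges, and then a Ces\`aro/integration-by-parts argument to show $\tfrac1t\int_0^t r\,(v,S_r v)\,\dd r\to 0$; your alternative ``direct splitting'' would amount to exactly this.
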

\begin{proof}
First of all, note that the random walk part of $X_t$ is independent of the rest. Second, note that using Lemma~\ref{lemma:L2approxintegral} and the independence of $v^\gamma$ and $\overline{N}$,
\begin{eqnarray*}
    \Cov\left(\int_0^t v_s^\gamma \dd \overline{N}_s,\lambda \int_0^t v_s^\gamma \dd s\right) &=&  \lim_{n\rightarrow\infty} \sum_{i,j=0}^{n-1} \Cov\left(v_{s_i}(\overline{N}_{s_{i+1}}-\overline{N}_{s_i}),\lambda v_{s_j} (s_{j+1}-s_j)\right)\\
    &=& \lim_{n\rightarrow\infty} \sum_{i,j=0}^{n-1} \lambda v_{s_j} (s_{j+1}-s_j) \Cov\left(v_{s_i},v_{s_j}\right)\E\left[\overline{N}_{s_{i+1}}-\overline{N}_{s_i}\right] = 0.
\end{eqnarray*}
This implies that 
\begin{equation*}
    \var(X_t) = \var(Y_{2\kappa t})+ \var\left(\int_0^t v_s^\gamma \dd \overline{N}_s\right) + \var\left(\lambda \int_0^t v_s^\gamma \dd s\right).
\end{equation*}
In other words, each of the parts of $X_t$ in~\eqref{eq:threeparts} has its own contribution to the variance of $X_t$ and hence to the limiting diffusion coefficient. Similar to before, we will refer to these as the random walk part, the martingale part and the active part of the diffusion coefficient. We will now calculate these contributions.

First, $Y_{2\kappa t}$ is the difference of two independent Poisson random variables with rate $\kappa t$. Therefore
\begin{equation}\label{eq:difcoefrandpart}
     \lim_{t\rightarrow\infty} \frac{\var(Y_t)}{t} =  \lim_{t\rightarrow\infty} \frac{\kappa t+ \kappa t}{t} = 2\kappa.
\end{equation}

Second, using Lemma~\ref{lemma:L2approxintegral}, the independence of $v^\gamma$ and $\overline{N}$ and the fact that $\E v^\gamma_s = \E\left[ \overline{N}_{s_{i+1}}-\overline{N}_{s_i}\right] = 0$, we see
\begin{eqnarray*}
    \var\left(\int_0^t v_s^\gamma \dd \overline{N}_s\right) &=& \lim_{n\rightarrow\infty} \sum_{i,j=0}^{n-1} \Cov\left(v_{s_i} (\overline{N}_{s_{i+1}}-\overline{N}_{s_i}), v_{s_j} (\overline{N}_{s_{j+1}}-\overline{N}_{s_j})\right) \\
    &=& \lim_{n\rightarrow\infty} \sum_{i=0}^{n-1} \var\left(v_{s_i} (\overline{N}_{s_{i+1}}-\overline{N}_{s_i})\right) 
    = \lim_{n\rightarrow\infty} \sum_{i=0}^{n-1} \var\left(v_{s_i}\right)\var\left(\overline{N}_{s_{i+1}}-\overline{N}_{s_i}\right)  \\
    &=& \lim_{n\rightarrow\infty} \sum_{i=0}^{n-1} \int v^2
    \dd\mu \lambda (s_{i+1}-s_i) = \lambda t \int v^2
    \dd\mu.
\end{eqnarray*}
Therefore
\begin{equation}\label{eq:difcoefmartpart}
    \lim_{t\rightarrow\infty} \frac{\var\left(\int_0^t v_s^\gamma \dd \overline{N}_s\right)}{t} =  \lim_{t\rightarrow\infty} \frac{\lambda t \int v^2
    \dd\mu}{t} = \lambda \int v^2
    \dd\mu.
\end{equation}

For the third part we calculate the limiting variance of an additive functional of a Markov process. This formula was already obtained for instance in~\cite[Corollary 1.9]{kipnis1986central} and~\cite[Lemma 2.4]{demasi1989invariance} (for reversible Markov processes). In fact, it is known as Green-Kubo relations, which go back to~\cite{green1954markoff} and~\cite{kubo1957statistical}. For completeness, we provide the calculations for our specific context here. Using the stationarity of $v^\gamma$ and the symmetry of covariance, we compute
\begin{eqnarray}
    \var\left(\int_0^t v_s^\gamma \dd s\right) &=& \int_0^t \int_0^t \Cov(v^\gamma_s,v^\gamma_r)\dd r \dd s = 2 \int_0^t \int_0^s \Cov(v^\gamma_s,v^\gamma_r)\dd r \dd s = 2 \int_0^t \int_0^s \Cov(v^\gamma_{s-r},v^\gamma_0)\dd r \dd s \nonumber \\
    &=& 2 \int_0^t \int_0^s \Cov(v^\gamma_r,v^\gamma_0)\dd r \dd s = 2 \int_0^t \int_r^t \Cov(v^\gamma_r,v^\gamma_0)\dd s \dd r = 2 \int_0^t (t-r) \Cov(v^\gamma_r,v^\gamma_0) \dd r\nonumber\\
    &=& \frac{2}{\gamma} \int_0^{\gamma t}(t-r)\Cov(v(M_{r}),v(M_0))\dd r = \frac{2}{\gamma} \int_0^{\gamma t}(t-r)(v,S_rv)\dd r.\label{eq:varthird}
\end{eqnarray}
To compute this, first note that with Assumption~\eqref{eq:assumptionlimst} we see that
\begin{equation}\label{eq:greenconv}
    \lim_{t\rightarrow\infty} \int_0^{t} (v,S_rv)\dd r = \left(v,\lim_{t\rightarrow\infty} \int_0^t S_r v\dd r\right) = \left(v,-A^{-1}v\right).
\end{equation}
Note that the convergence of $\int_0^t (v,S_rv)\dd r$ also implies that
\begin{equation}\label{eq:cesaroint}
    \lim_{t\rightarrow\infty} \int_0^{t}\frac{r}{t} (v,S_rv)\dd r = 0.
\end{equation}
Combining~\eqref{eq:varthird},~\eqref{eq:greenconv} and~\eqref{eq:cesaroint}, we obtain
\begin{equation}\label{eq:difcoefactpart}
    \lim_{t\rightarrow\infty} \frac{\var\left(\lambda \int_0^t v_s^\gamma \dd \overline{N}_s\right)}{t} = \lim_{t\rightarrow\infty} \frac{2\lambda^2}{\gamma} \int_0^{\gamma t}(v,S_rv)\dd r + \lim_{t\rightarrow\infty}2\lambda^2 \int_0^{\gamma t} \frac{r}{\gamma t} (v,S_rv)\dd r = \frac{2\lambda^2}{\gamma}(v,-A^{-1}v).
\end{equation}
Now combining~\eqref{eq:difcoefrandpart},~\eqref{eq:difcoefmartpart} and~\eqref{eq:difcoefactpart}, we obtain the result.
\end{proof}

\subsubsection{Higher dimensions}
So far we considered an active particle that only moves in one dimension. However, we can just as well treat a higher dimensional situation. To this end fix a dimension $d\in\N$. Let $Y$ be a $d$-dimensional simple random walk, i.e. each component of $Y$ is an independent copy of the $Y$ that we had in the $1$-dimensional situation. Let the speed function $v$ be an element of $L^2((\Omega,\mu),\R^d)$ such that $\int v\dd \mu=0$ (in $\R^d$). We denote by $\Sigma$ the covariance matrix of $v$ under $\mu$, i.e.
\begin{equation*}
    \Sigma_{ij} = \Cov(v(M_0)_i,v(M_0)_j).
\end{equation*}
Let again $X_t$ denote the position of the active particle, now in $\R^d$, with random walk part $Y$ and speed function $v$. The internal state process remains the same as the $1$-dimensional case. To find the limiting diffusion matrix of the active particle, we can show that similar to the $1$-dimensional case
\begin{equation*}
    \Cov((X_t)_i,(X_t)_j) = \Cov((Y_{2\kappa t})_i,(Y_{2\kappa t})_j) + \Cov\left(\int_0^t(v_s^\gamma)_i\dd \overline N_s,\int_0^t(v_s^\gamma)_j\dd \overline N_s\right) + \Cov\left(\int_0^t(v_s^\gamma)_i\dd s,\int_0^t(v_s^\gamma)_j\dd s\right).
\end{equation*}

Now if we go through calculations that are very similar to the 1-dimensional case, we obtain the following.
\begin{thm}\label{thm:ddimcase}
    Let $X_t$ be the position in $\R^d$ of the active particle that we just defined. Then
    \begin{equation}\label{eq:difmat}
        \lim_{t\rightarrow\infty}\frac{\Cov((X_{t})_i,(X_{t})_j)}{t} = 2\kappa\delta_{i,j} + \lambda \Sigma_{ij} + \frac{\lambda^2}{\gamma} [((v)_i,-A^{-1}(v)_j)+((v)_j,-A^{-1}(v)_i)].
    \end{equation}
\end{thm}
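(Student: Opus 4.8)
The plan is to follow the one-dimensional argument of Theorem~\ref{thm:limdifcoef} component by component, first justifying the decomposition displayed just above the statement and then evaluating each of its three covariance terms separately. The cross terms between the random walk, martingale and active parts vanish exactly as in the scalar case: the random walk $Y$ is independent of everything else, and the martingale--active cross term is killed by the independence of $v^\gamma$ and $\overline N$ together with $\E[\overline N_{s_{k+1}}-\overline N_{s_k}]=0$, just as in the first computation in the proof of Theorem~\ref{thm:limdifcoef}. Having reduced to the three diagonal terms, I would compute each limit in turn.

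The random walk and martingale parts are routine. Since each component of $Y$ is an independent copy of the one-dimensional walk, $\Cov((Y_{2\kappa t})_i,(Y_{2\kappa t})_j)$ vanishes for $i\neq j$ and equals $2\kappa t$ for $i=j$, giving $2\kappa\delta_{i,j}$. For the martingale part I would write the stochastic integral as an $L^2$-limit of Riemann sums (Lemma~\ref{lemma:L2approxintegral}) and use that the increments of $\overline N$ over disjoint intervals are independent and mean-zero; the terms off-diagonal in time drop out, and the surviving terms contribute $\sum_k \E[(v^\gamma_{s_k})_i(v^\gamma_{s_k})_j]\,\var(\overline N_{s_{k+1}}-\overline N_{s_k})$. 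Because $v$ is centred, $\E[(v^\gamma_{s_k})_i(v^\gamma_{s_k})_j]=\Sigma_{ij}$ by stationarity, and this sum converges to $\lambda t\,\Sigma_{ij}$, yielding the term $\lambda\Sigma_{ij}$.

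The active part is where the genuinely new feature appears, and I expect it to be the main point of the proof. Writing the covariance of the two time integrals as the double integral $\int_0^t\int_0^t \Cov((v^\gamma_s)_i,(v^\gamma_r)_j)\,\dd r\,\dd s$, the obstacle is that, unlike in one dimension, the integrand is \emph{not} symmetric under $s\leftrightarrow r$ alone: stationarity gives $\Cov((v^\gamma_s)_i,(v^\gamma_r)_j)=((v)_j,S_{\gamma(s-r)}(v)_i)$ for $s\geq r$, and the symmetry of covariance only exchanges the time \emph{and} the component indices simultaneously. I would therefore split the square $[0,t]^2$ into $\{s\geq r\}$ and $\{s\leq r\}$ rather than doubling a single region. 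On $\{s\geq r\}$ the substitution used in~\eqref{eq:varthird}, followed by~\eqref{eq:greenconv} and~\eqref{eq:cesaroint} applied with $(v)_i,(v)_j$ in place of $v$, produces $\tfrac1\gamma((v)_j,-A^{-1}(v)_i)$ after dividing by $t$; on $\{s\leq r\}$ the same computation gives $\tfrac1\gamma((v)_i,-A^{-1}(v)_j)$. These two contributions are distinct precisely because $-A^{-1}$ need not be self-adjoint in the non-reversible case, and their sum, multiplied by the factor $\lambda^2$ from the active part, is exactly the symmetrised bracket $\tfrac{\lambda^2}{\gamma}[((v)_i,-A^{-1}(v)_j)+((v)_j,-A^{-1}(v)_i)]$. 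Adding the three contributions gives~\eqref{eq:difmat}. In one dimension the two regions coincide, which is why the scalar formula carries a clean factor of $2$.
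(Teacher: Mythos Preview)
Your proposal is correct and follows exactly the approach the paper indicates: the paper itself offers no proof beyond ``calculations that are very similar to the 1-dimensional case,'' and you have carried those out faithfully. In particular, you have correctly isolated the one place where the $d$-dimensional argument genuinely differs from the scalar one---the integrand $\Cov((v^\gamma_s)_i,(v^\gamma_r)_j)$ is not symmetric under $s\leftrightarrow r$ alone---and handled it by splitting $[0,t]^2$ into the two triangles, which is precisely what produces the symmetrised bracket in~\eqref{eq:difmat}.
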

\begin{rmk}
The sum of inner products in~\eqref{eq:difmat} equals $2((v)_i,-\sym(A^{-1})(v)_j)$ (where for an operator $B$ on $L^2(\mu)$, $\sym(B) = (B+B^*)/2$ is the symmetric part). Note that the $1$-dimensional case can be retrieved from this by realising that for any operator $B$ and function $w$, $(w,Bw) = (w,\sym(B)w)$.
\end{rmk}

\subsubsection{Interpretation}
We now briefly discuss the various terms appearing in the RHS of~\eqref{eq:difcoef}. First of all, as is clear directly from the definition of the process, the random walk part is independent of the rest and therefore produces the term $2\kappa$. 

Now, to understand the other two terms, let us first consider what happens in the limit of $\gamma$ to infinity. In that case the state process is speeded up so much that it reaches equilibrium between subsequent jumps of the $N$-process. Therefore the jump sizes are just independent copies of $v(M_0)$ (so $v$ under the stationary measure $\mu$), so the process is simply a random walk with jump rate $\lambda$ and jump size distribution $v(M_0)$. In this case the diffusion coefficient should be $\lambda \Var(v(M_0))=\lambda\int v^2\dd\mu$, which is indeed what we find when we let $\gamma$ go to infinity in~\eqref{eq:difcoef}.

Finally, the third term of~\eqref{eq:difcoef} corresponds to the case where $\gamma$ is finite. Therefore this term comes from the dependence between the active jumps due to the temporal dependence in the state process. Hence this term comes from the activity of the particle. These considerations justify the name `active part' for the third part of~\eqref{eq:difcoef}. This is the only part that depends on the state process through more than just its stationary distribution. We will analyse this term more thoroughly in Section~\ref{sec:revopt}.

\begin{rmk}
Note that for $X^c$ (see Remark~\ref{rem:Xcont}), the random walk part of $X^c_t$ has variance $2\kappa t$, the martingale part is left out and the active part is the same as in $X$, so we obtain
\begin{equation*}
    \lim_{t\rightarrow\infty}\frac{\var(X^c_t)}{t} = 2\kappa+ \frac{2\lambda^2}{\gamma}(v,-A^{-1}v).
\end{equation*}
\end{rmk}
\begin{rmk}\label{rem:intcov}
Note that instead of writing $(v,-A^{-1}v)$, we could also have kept the covariance in the expression in~\eqref{eq:varthird} to obtain in a similar way that the active part of the limiting diffusion coefficient equals
\begin{equation*}
    \frac{2\lambda^2}{\gamma} \int_0^\infty \Cov(v_0,v_r)\dd r.
\end{equation*}
This might be easier to calculate for processes of which the covariance function is explicitly known.
\end{rmk}

\begin{rmk}\label{rem:zeroavg}
The assumption that $\int v\dd \mu=0$ makes sure that $\E X_t=0$. Considering a speed function that does not have average $0$ is equivalent to setting the speed function to be $v+c$ where $c$ is a constant and $v$ still satisfies $\int v\dd\mu=0$. In this case the expectation equals $\E X_t = c\lambda t$.
Of course the random walk part is not affected by this choice. Now it is easy to see following our calculations above that with the new speed function the expectation of the martingale part remains the same, but the variance changes. Contrarily, the expectation of the active part changes, but the variance stays the same (since the change is deterministic). Overall, the limiting diffusion coefficient becomes:
\begin{equation*}
    \lim_{t\rightarrow\infty}\frac{\var(X_t)}{t} = 2\kappa + \lambda\left(\int v^2\dd\mu+c^2\right) + \frac{2\lambda^2}{\gamma}(v,-A^{-1}v).
\end{equation*}
\end{rmk}

\subsection{Examples}\label{subsec:examples}
Now we give some examples. We start with two cases where the state process $M$ is a Markov chain with 2 or 3 states. Then we take $M$ to be an Ornstein-Uhlenbeck process and Brownian motion on a circle and finally we consider an Ornstein-Uhlenbeck process in $\R^2$.

First, in these examples we need to calculate $(v,-A^{-1}v)$ (cf.~\eqref{eq:difcoef}). Now write $u=-A^{-1}v$ and recall that this means $u=\int_0^\infty S_t v \dd t$, which implies $-Au=v$. In order to compute $(v,-A^{-1}v)$, we can procede as follows. First we find a function $w$ such that $-Aw=v$. Then $(v,w)=(v,-A^{-1}v)$. Indeed, since $\mu$ is the unique ergodic measure, the only $h\in D(A)$ with $Ah=0$ are constant functions, so if $-Au=-Aw$, $u$ and $w$ only differ by a constant. Therefore $(v,w) = (v,u+c\1) = (v,-A^{-1}v)+c \int v\dd \mu = (v,-A^{-1}v)$.

Second, for all of the examples, we need to verify Assumptions~\eqref{eq:assumptionlimst} and~\eqref{eq:assumptioncontL2}. In Example~\ref{ex:twostates} and~\ref{ex:threestates}, the state space is finite so both assumptions always hold. In Example~\ref{ex:OU},~\ref{ex:sinBt} and~\ref{ex:multidimOU}, Assumption~\eqref{eq:assumptioncontL2} can be verified by a direct computation, since the correlation functions for Brownian motion and the Ornstein-Uhlenbeck process are explicitly known. As we noted before, for Assumption~\eqref{eq:assumptionlimst}, it suffices to find constants $c,C>0$ such that $\|S_tv\|\leq C\exp(-ct)$. This is implied by the Poincar\'e inequality (see~\cite[Thm 2.18]{handel2016prob}). The Poincar\'e inquality for the Ornstein-Uhlenbeck process is proved in~\cite[Lem 2.22, Thm 2.25]{handel2016prob} (and holds similarly in the higher dimensional case). By~\cite[Rem 2.19]{handel2016prob}, the Poincar\'e inequality for Brownian motion with drift on the circle follows from the Poincar\'e inequality for driftless Brownian motion on the circle. The exponential ergodicity (and the corresponding Poincar\'e inequality) in this case is known and can be shown using Fourier analysis.

\begin{example}[2 states]\label{ex:twostates}
We start with the case where $M$ is a Markov chain on $\s=\{ 1,-1\}$ where the state switches with rate $1$ and $v$ is the identity function $[1,-1]^T$. Then $\mu=(\delta_{-1}+\delta_1)/2$,
\begin{equation*}
    A = \begin{bmatrix} 
        -1 & 1 \\
        1 & -1 
    \end{bmatrix}
\end{equation*}
and indeed $\int v\dd \mu =0$. Now choose $w=[1,0]^T$, then $-Aw=v$. So $(v,-A^{-1}v)=(v,w)=1/2 * (1*1)+ 1/2 * (-1*0) = 1/2$. Also we compute $\int v^2\dd \mu = \int 1 \dd \mu = 1$. Now applying Theorem~\ref{thm:limdifcoef} yields
\begin{equation*}
    \lim_{t\rightarrow \infty}\frac{\Var(X_{t})}{t}  = 2\kappa + \lambda \int v^2\dd \mu + \frac{2\lambda^2}{\gamma} (v,w) = 2\kappa + \lambda + \frac{\lambda^2}{\gamma}.
\end{equation*}
Note that the same diffusion coefficient is found in the calculation in Section~\ref{sec:twostates}.
\end{example}

\begin{example}[3 states]\label{ex:threestates}
Now let $M$ be a Markov chain on the triangle with nodes $\s=\{n_1,n_2,n_3\}$ where the state switches with rate $1$ and jumps to the right with probability $1/2+a$ and to the left otherwise (where $|a|\leq 1/2)$. Here $\mu=(\delta_{n_1}+\delta_{n_2}+\delta_{n_3})/3$, $v=[v_1,v_2,v_3]^T$ such that $v_1+v_2+v_3=0$ and
\begin{equation*}
    A=\begin{bmatrix}
        -1 & \frac{1}{2}+a & \frac{1}{2}-a \\
        \frac{1}{2}-a & -1 & \frac{1}{2}+a \\
        \frac{1}{2}+a & \frac{1}{2}-a & -1
    \end{bmatrix}.
\end{equation*}
To find $w$ we solve the linear system $-Aw=v$, which yields
%Now to find $w$ we solve the linear system
%\begin{equation*}
%        \left[
%    \begin{array}{@{}ccc|c@{}}
%    1 & -\frac{1}{2}-a & -\frac{1}{2}+a & v_1 \\
%    -\frac{1}{2}+a & 1 & -\frac{1}{2}-a & v_2 \\
%    -\frac{1}{2}-a & -\frac{1}{2}+a & 1 & v_3
%    \end{array}
%\right],
%\end{equation*}
%Note that the last row is redundant. Therefore we can leave it away and set $w_3=0$, which leads us to solve
%\begin{equation*}
%        \left[
%    \begin{array}{@{}cc|c@{}}
%    1 & -\frac{1}{2}-a  & v_1 \\
%    -\frac{1}{2}+a & 1  & v_2 
%    \end{array}
%\right],
%\end{equation*}
%which yields
\begin{equation*}
    w = \left[\frac{v_1+(a+1/2)v_2}{3/4 + a^2},\frac{(1/2-a)v_1+v_2}{3/4 + a^2},0\right]^T.
\end{equation*}
This gives
\begin{eqnarray}
    (v,w)&=&\frac{v_1^2+v_1v_2+v_2^2}{3(3/4 + a^2)} = \frac{v_1^2+v_1v_2+v_2^2+v_3(v_1+v_2+v_3)}{3(3/4 + a^2)}\nonumber\\
    &=& \frac{(v_1+v_2+v_3)^2 - (v_1v_2+v_2v_3+v_1v_3)}{9/4 + 3a^2} =-\frac{v_1v_2+v_2v_3+v_1v_3}{9/4 + 3a^2}\label{eq:ex2actpart},
\end{eqnarray}
where we used in the last step that $v_1+v_2+v_3=0$.
Also we compute $\int v^2\dd \mu = (v_1^2+v_2^2+v_3^2)/3$. Now applying Theorem~\ref{thm:limdifcoef} yields
\begin{equation*}
    \lim_{t\rightarrow \infty}\frac{\Var(X_{t})}{t} = 2\kappa + \lambda \int v^2\dd \mu + \frac{2\lambda^2}{\gamma} (v,w) = 2\kappa + \frac{\lambda}{3}(v_1^2+v_2^2+v_3^2) + \frac{2\lambda^2}{\gamma}\frac{(-v_1v_2-v_2v_3-v_1v_3)}{9/4 + 3a^2}.
\end{equation*}
\end{example}

\begin{example}[Ornstein-Uhlenbeck process]\label{ex:OU}
Now let us consider a different kind of example where $M$ is a continuous process, namely an Orstein-Uhlenbeck process satisfying
\begin{equation*}
    \dd M_t = -\theta M_t \dd t + \sigma \dd B_t,
\end{equation*}
where $B_t$ is a Brownian motion independent of everything else (note that a similar process is studied in~\cite{szamel2014self}). This process has stationary distribution $\mu\sim N(0,\sigma^2/(2\theta))$. We take $v(x)=x$ (indeed $\int x \dd \mu = 0$). We know that the generator equals
\begin{equation*}
    A = -\theta x \frac{\dd}{\dd x} + \frac{\sigma^2}{2} \frac{\dd^2}{\dd x^2}
\end{equation*}
and has as domain $D(A)$ all functions in $L^2(\mu)$ of which the first and second (weak) derivative are also in $L^2(\mu)$. A quick inspection shows that if we set $w(x)=x/\theta$, then $w$ in $D(A)$ and $-Aw=v$. Now we compute $(v,w) = \int x^2/\theta \dd \mu = \sigma^2/(2\theta^2)$. Also $\int v^2 \dd \mu = \int x^2\dd \mu = \sigma^2/(2\theta)$.
Now Theorem~\ref{thm:limdifcoef} gives us
\begin{equation*}
    \lim_{t\rightarrow \infty}\frac{\Var(X_{t})}{t} = 2\kappa + \lambda \int v^2\dd \mu + \frac{2\lambda^2}{\gamma} (v,w) = 2\kappa + \frac{\lambda \sigma^2}{2\theta} + \frac{\lambda^2}{\gamma}\frac{\sigma^2}{\theta^2}.
\end{equation*}
Note that the constant $(v,w)=\sigma^2/(2\theta^2)$ could also have been directly obtained by calculating
\begin{equation}
    \Var\left(\lambda \int_0^t v^\gamma_s \dd s\right) = \lambda^2 \int_0^t \int_0^t \Cov(v^\gamma_s,v^\gamma_r)\dd s\dd r
\end{equation}
followed by rescaling and taking limits, since the covariance of the Ornstein-Uhlenbeck process is explicitly known. This yields the same result. Alternatively, one could have used the expression in Remark~\ref{rem:intcov} to see
\begin{equation*}
    (v,-A^{-1}v) = \int_0^\infty \Cov(v_0,v_t)\dd t = \int_0^\infty \frac{\sigma^2}{2\theta}\exp(-\theta t) \dd t = \frac{\sigma^2}{2 \theta^2}.
\end{equation*}
\end{example}

\begin{example}[Sine of Brownian motion with drift]\label{ex:sinBt}
In this example we want the speed process $v_t$ to be $\sin(M_t)$ where $M_t=B_{2at} + bt$, $(B_t,t\geq 0)$ is Brownian motion and $a,b>0$ are constants. However, $X_t$ does not have a stationary (probability) distribution. Therefore we take $M$ to be $B_{2at} + bt$ on a circle $\s$ with radius $1$ and we set $v(\theta)=\sin(\theta)$. Now $\mu = \frac{1}{2\pi}\dd\theta$, so indeed $\int v \dd \mu=0$. The generator is given by
\begin{equation*}
    A = a \frac{\dd^2}{\dd \theta^2} + b \frac{\dd}{\dd \theta}
\end{equation*}
with domain $D(A)$  containing all smooth functions on $\s$. Substituting $w(\theta) = c \sin(\theta) + d \cos(\theta)$ and solving for $c,d$ shows that 
\begin{equation*}
    w(\theta) = \frac{a}{a^2+b^2}\sin(\theta) + \frac{b}{a^2+b^2}\cos(\theta)
\end{equation*}
satisfies $-Aw=v$ with $w\in D(A)$. Now we calculate and see that $\int v^2\dd \mu = \frac{1}{2\pi} \int_0^{2\pi} \sin^2(\theta)\dd \theta=1/2$ and 
\begin{equation*}
    (v,w)=\frac{1}{2\pi} \int_0^{2\pi}  \sin(\theta)\left(\frac{a}{a^2+b^2}\sin(\theta) + \frac{b}{a^2+b^2}\cos(\theta)\right)\dd \theta = \frac{a}{2(a^2+b^2)},
\end{equation*}
so applying Theorem~\ref{thm:limdifcoef}, we see:
\begin{equation}
    \lim_{t\rightarrow \infty}\frac{\Var(X_{t})}{t} = 2\kappa + \lambda \int v^2\dd \mu + \frac{2\lambda^2}{\gamma} (v,w) = 2\kappa + \frac{\lambda}{2} + \frac{2\lambda^2}{\gamma}\frac{a}{a^2+b^2}.\label{eq:difcoefsinbt}
\end{equation}
Note that first of all the last term vanishes when either $a$ or $b$ goes to infinity, similar to what happens when $\gamma$ goes to infinity (see the considerations at the end of Section~\ref{sec:calcdifcoef}). However, note that this part also vanishes when $a$ goes to $0$, even when $b>0$. Indeed, when $a=0$, the speed process is $\sin(M_0+b t)$, where $M_0$ is sampled from $\mu$. Now it is easy to see that $\int_0^t v_s^\gamma\dd s$ is bounded in $t$, so $\var(\int_0^t v_s^\gamma\dd s)/t$ goes to $0$. In that sense the particle is not active in the limit.
\end{example}
\begin{example}\label{ex:multidimOU}
As example for the higher dimensional case, we take $M$ to be the two-dimensional stationary Ornstein-Uhlenbeck process given by
\begin{equation*}
    \dd M_t = -\Theta M_t\dd t + \sigma \dd W_t,
\end{equation*}
where $W_t$ is a two-dimensional Brownian motion,
\begin{equation*}
    \Theta =\begin{bmatrix}
        1 & a \\
        -a & 1 
    \end{bmatrix}
\end{equation*}
and $\sigma,a>0$ are constants. The invariant distribution is $N(0,\sigma^2/2 I)$. We set $v$ to be the identity function.
The corresponding generator is
\begin{equation*}
    Af = - (\nabla f)^T \Theta x + \frac{\sigma^2}{2} \Delta f.
\end{equation*}
First we see that $\Sigma = \frac{\sigma^2}{2}I$.
Now set
\begin{equation*}
    u_1(x) = \frac{1}{1+a^2}(x_1-ax_2), \hspace{1cm} u_2(x) = \frac{1}{1+a^2}(ax_1+x_2),
\end{equation*}
then $-Au_1(x) = x_1=(v)_1(x)$ and $-Au_2(x)=x_2=(v)_2(x)$. Using these we obtain
\begin{equation*}
    ((v)_1,-A^{-1}(v)_1)+((v)_1,-A^{-1}(v)_1) = 2 \frac{1}{1+a^2} (x_1,x_1-ax_2) = \frac{2}{1+a^2}(x_1,x_1) = \frac{\sigma^2}{1+a^2}.
\end{equation*}
Also 
\begin{equation*}
    ((v)_1,-A^{-1}(v)_2)+((v)_2,-A^{-1}(v)_1) = (x_1,ax_1+x_2) + (x_2,x_1-ax_2) = a(x_1,x_1)-a(x_2,x_2) = 0.
\end{equation*}
Here we used that under $\mu$, $(x_1,x_1)=(x_2,x_2)=\sigma^2/2$ and $(x_1,x_2)=(x_2,x_1)=0$. 

Applying Theorem~\ref{thm:ddimcase}, we see that the limiting diffusion matrix equals
\begin{equation}\label{eq:difcoefddim}
    (2\kappa + \lambda \frac{\sigma^2}{2}+ \frac{\lambda^2}{\gamma}\frac{\sigma^2}{1+a^2})I.
\end{equation}
\end{example}

\subsection{Invariance principle}
So far we have calculated the limiting diffusion coefficient of the active particle. In a lot of cases one can in fact show a Central Limit Theorem (CLT) for (the trajectory of) the active particle. This type of problem has been dealt with in a lot of generality under several sets of assumptions before, so we will not provide all the details. 

As we noted before the active particle process decomposes naturally into three parts. First of all, there is the random walk part, which is independent of the rest. The CLT for this case is well-known.

Then there is the martingale part 
\begin{equation*}
    \int_0^t v_s^\gamma \dd\overline N_s.
\end{equation*}
As the name suggests, this term is actually a martingale with respect to the filtration $\mathscr{F}_t=\sigma\{(M_{\gamma s},N_s),0\leq s\leq t\}$ (see Remark~\ref{rem:dynkinmart}). 
Moreover, the active part
\begin{equation*}
    \lambda \int_0^t v_s^\gamma \dd s,
\end{equation*}
is an additive functional of a stationary Markov process and can (under some technical assumptions) be approximated by a martingale with respect to the filtration $\mathscr{F'_t}=\sigma\{M_{\gamma s},0\leq s\leq t\}$ and hence (by independence of $N$ and the active part) also with respect to $\mathscr{F}_t$. This type of result was obtained in~\cite{gordin1978central},~\cite{kipnis1986central},~\cite{toth1986persistent} and~\cite{maxwell2000central}.

Therefore the sum of the martingale part and the active part
\begin{equation*}
    \int_0^t v_s^\gamma \dd N_s
\end{equation*}
can be approximated by a martingale with respect to $\mathscr{F}_t$. Since the martingale part has a source of randomness (the Poisson process $N$) that is independent of the active part, the martingales cannot cancel each other out.
Finally, as is done in the papers that were just cited, one can apply functional martingale central limit theorems such as in~\cite{durrett1978functional} and~\cite{helland1982central} to obtain the CLT for the active particle.

\begin{rmk}\label{rem:dynkinmart}
The fact that the martingale part is actually a martingale with respect to $\mathscr{F}_t$ can be shown from a direct computation. However, this martingale also naturally shows up as a Dynkin martingale. Because of the underlying state process, the position $X_t$ itself is not a Markov process. However, the pair $(X_t,M^\gamma_t)$ is (where $M^\gamma_t$ is $M$ speeded up by a factor $\gamma$). The corresponding generator $L$ is given by
\begin{equation*}
    Lf(x,m) = \lambda (f(x+v(m),m)-f(x,m)) + \gamma (Af(x,\cdot))(m).
\end{equation*} 
Setting $g(x,m)=x$, we see that the following is (formally) a martingale with respect to the natural filtration of $(X_t,M^\gamma_{t})$:
\begin{equation*}
    \mathscr{M}_t := g(X_t,M^\gamma_t)-g(X_0,M^\gamma_0)-\int_0^t Lg(X_s,M^\gamma_s)\dd s = X_t-X_0-\int_0^t \lambda v^\gamma_s\dd s = \int_0^t v^\gamma_s\dd \overline N_s.
\end{equation*}
The quadratic variation of this martingale equals
\begin{equation*}
    \int_0^t (Lg^2 - 2gLg)(X_s,v^\gamma_s) \dd s = \lambda \int_0^t (v^\gamma_s)^2\dd s.
\end{equation*}
Note that by ergodicity of $M$ we have that almost surely
\begin{equation*}
    \lim_{t\rightarrow \infty} \frac{\lambda}{t}\int_0^t (v^\gamma_s)^2\dd s = \lambda \int v^2\dd\mu,
\end{equation*}
which confirms that the martingale part converges to a Brownian motion with diffusion coefficient $\lambda \int v^2\dd\mu$.
\end{rmk}

\section{Diffusion coefficient: the role of reversibility}\label{sec:revopt}
Now that we found an expression for the limiting diffusion coefficient of the active particle, we want to understand how it depends on the internal state process. In particular we want to understand the role of reversibility of the internal state process with respect to the stationary measure $\mu$. Recall that we say that the state process $M_t$ is reversible with respect to $\mu$ if the generator $A$ is a self-adjoint operator on its domain in $L^2(\mu)$. We will fix the stationary measure $\mu$ and study processes with this stationary measure. We will also assume in the rest of this section that the internal state space $\s$ is finite, this is mainly to avoid technical complications.

When we inspect the different terms of the diffusion coefficient~\eqref{eq:difcoef}, we see the following.
\begin{itemize}
    \item[a)] The random walk part, $2\kappa$, does not depend on the internal state process.
    \item[b)] The martingale part, $\lambda\int v^2\dd\mu$, only depends on the internal state process through its stationary measure $\mu$.
    \item[c)] The active part, $\tfrac{2\lambda^2}{\gamma}(v,-A^{-1}v)$ depends on the whole internal state process, i.e. its stationary measure as well as its generator.
\end{itemize}
We conclude that given a stationary measure $\mu$, only the active part might depend on the reversibility of the state process with respect to $\mu$. Since also the factor $\tfrac{2\lambda^2}{\gamma}$ is fixed, we will dedicate the rest of this section to studying the behaviour of the term
\begin{equation*}
    \left(v,-A^{-1}v\right).
\end{equation*}

To further specify our results, note that the generator $A$ can be decomposed into a symmetric part $\sym(A)=(A+A^*)/2$ and an antisymmetric part $\asym(A)=(A-A^*)/2$, where $A^*$ denotes the adjoint of $A$ as operators on $L^2(\mu)$. In particular the internal state process is reversible with respect to $\mu$ if $\sym(A)=A$ and accordingly $\asym(A)=0$. We will show the following.
\begin{itemize}
    \item[i)] In Section~\ref{subsec:rev-nonrev} we will consider state processes with the same symmetric part. We will show that the active part of the diffusion coefficient is maximal for the process generated by the symmetric part itself, for any choice of the speed function $v$. In other words: the diffusion coefficient is maximal for the reversible process. Mathematically this means that we will prove that for all $v$ that satisfy $\int v\dd \mu =0$,
    \begin{equation*}
        \left(v,-A^{-1}v\right)\leq \left(v,-\sym(A)^{-1}v\right).
    \end{equation*}
    This is Proposition~\ref{prop:revmax}. We also generalise this to active particles in higher dimensions.
    \item[ii)] In Section~\ref{subsec:revs} we will consider reversible processes with the requirement that the total jumping rate from each point is the same. We will show that in this case there is no reversible process that maximises the diffusion coefficient for each choice of the speed function. In other words: within the class of reversible processes (with the same total jumping rates) there is no optimal reversible process.
\end{itemize}
Before this, we will start with some motivating examples in Section~\ref{subsec:mot}.

\begin{rmk}
Note that the active part of the diffusion coefficient only depends on the ``zero-average"-part of the speed function (see Remark~\ref{rem:zeroavg}). Therefore it remains the same when we replace the speed function $v$ by $v+c$, where $c$ is a constant. Similarly, the active part of the diffusion coefficient is the same for $X^c$ (from Remark~\ref{rem:Xcont}). Because of this, if we replace $v$ by $v+c$ or if we consider the process $X^c$ instead of $X$, the results of this section are still valid.
\end{rmk}

\subsection{Motivation}\label{subsec:mot}
As a motivating example, let us look back at Example~\ref{ex:threestates}. Note that for each $a\in[-1/2,1/2]$, the state process has the same stationary distribution, namely the uniform distribution. However, only for $a=0$ the process is reversible, whereas for $a=1/2$ or $a=-1/2$ the process is completely asymmetric (it only jumps to the right or only to the left, respectively). Hence we can think of $a$ as the parameter that tunes the non-reversibility of the state process. The expression that we found earlier (see~\eqref{eq:ex2actpart}) is
\begin{equation*}
    (v,-A^{-1}v) = \frac{-(v_1v_2+v_2v_3+v_1v_3)}{9/4 + 3a^2}.
\end{equation*}
Since $-(v_1v_2+v_2v_3+v_1v_3)\geq 0$ for $v$ with $\int v\dd \mu = \tfrac{1}{3}(v_1+v_2+v_3) = 0$, this expression is maximal for $a=0$, the reversible case, and decreases like $\tfrac{1}{1+a^2}$ for $a$ away from $0$. We conclude that out of this family of state processes, the reversible process maximizes the diffusion coefficient.

Now for a more general result, we go back to the three states example and note that the symmetric part of the generator (as an operator in $L^2(\mu)$) was the same for each $a$ and the antisymmetric part varied with $a$, indeed:
\begin{equation*}
    \frac{1}{3}\begin{bmatrix}
        -1 & \frac{1}{2}+a & \frac{1}{2}-a \\
        \frac{1}{2}-a & -1 & \frac{1}{2}+a \\
        \frac{1}{2}+a & \frac{1}{2}-a & -1
    \end{bmatrix} =
    \frac{1}{3} \begin{bmatrix}
        -1 & \frac{1}{2} & \frac{1}{2} \\
        \frac{1}{2} & -1 & \frac{1}{2} \\
        \frac{1}{2} & \frac{1}{2} & -1
    \end{bmatrix}
    + \frac{a}{3} \begin{bmatrix}
        0 & 1 & -1 \\
        -1 & 0 & 1 \\
        1 & -1 & 0
    \end{bmatrix}.
\end{equation*}

We want to show that this is true in general: out of all processes (with the same stationary measure $\mu$) of which the symmetric part of the generator is the same, the purely reversible process (so the purely symmetric one) maximizes $(v,-A^{-1}v)$. 

\begin{rmk}
Even though we restrict ourselves in this section to finite state spaces (mainly for technical reasons), notice that the same behaviour (the fact that the diffusion coefficient is maximal for reversible state processes) occurs in Example~\ref{ex:sinBt} and~\ref{ex:multidimOU}.

Indeed, in Example~\ref{ex:sinBt} the state process consists of a reversible part scaled with a constant $a$ and an non-reversible part with constant $b$ (so in particular the process is reversible if and only if $b=0$). The active part of the diffusion coefficient in~\eqref{eq:difcoefsinbt} equals
\begin{equation*}
    \frac{2\lambda^2}{\gamma}\frac{a}{a^2+b^2}.
\end{equation*}
So when we keep $a$ fixed, the active part is maximized in the reversible case.

In Example~\ref{ex:multidimOU} the active part of the diffusion matrix in~\eqref{eq:difcoefddim} equals
\begin{equation*}
    \frac{\lambda^2}{\gamma} \frac{\sigma^2}{1+a^2} I.
\end{equation*}
This matrix is maximal for $a=0$, which is the reversible case.
\end{rmk}

\subsection{Comparing reversible and non-reversible processes}\label{subsec:rev-nonrev}
In order to prove the main result, Proposition~\ref{prop:revmax} below, we first need the following lemma.
\begin{lemma}\label{lemma:skewsym}
Let $C$ be a skew-symmetric matrix. Then both $I+C$ and $I-C^2$ are invertible and for all $w$
\begin{equation*}
    (w,(I+C)^{-1}w) = (w,(I-C^2)^{-1}w) \leq (w,w).
\end{equation*}
\end{lemma}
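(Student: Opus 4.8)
The plan is to handle the two invertibility claims first and then prove the stated equality by a symmetrization argument, reserving the inequality for the end. Throughout I work with the adjoint $\ast$ taken with respect to the (real) inner product $(\cdot,\cdot)$, so that skew-symmetry of $C$ means $C^*=-C$. The first useful consequence is that $(w,Cw)=0$ for every $w$: indeed $(w,Cw)=(C^*w,w)=-(Cw,w)=-(w,Cw)$ by skew-symmetry and the symmetry of the inner product. From this, $I+C$ is injective, hence invertible on the finite-dimensional space, because $(I+C)w=0$ forces $w=-Cw$ and then $(w,w)=-(w,Cw)=0$. For $I-C^2$ I would note $-C^2=C^*C\geq 0$, so $I-C^2=I+C^*C\geq I$ is positive definite and therefore invertible as well.

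Next I would prove the equality. The key observation is that $(w,(I+C)^{-1}w)$ is a real number, so it equals $(w,\sym((I+C)^{-1})w)$ by the identity $(w,Bw)=(w,\sym(B)w)$ already recorded after Theorem~\ref{thm:ddimcase}. Since $((I+C)^{-1})^*=(I+C^*)^{-1}=(I-C)^{-1}$, the symmetric part is
\begin{equation*}
    \sym\big((I+C)^{-1}\big)=\tfrac{1}{2}\big[(I+C)^{-1}+(I-C)^{-1}\big].
\end{equation*}
Because $I+C$ and $I-C$ are commuting polynomials in $C$ with product $I-C^2$, I can write $(I+C)^{-1}=(I-C)(I-C^2)^{-1}$ and $(I-C)^{-1}=(I+C)(I-C^2)^{-1}$, whose sum is $2(I-C^2)^{-1}$. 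Hence $\sym((I+C)^{-1})=(I-C^2)^{-1}$, which gives exactly $(w,(I+C)^{-1}w)=(w,(I-C^2)^{-1}w)$.

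Finally, for the inequality I would invoke the operator monotonicity of the inverse: from $I-C^2\geq I$ established above, taking inverses reverses the order to give $(I-C^2)^{-1}\leq I$, and evaluating in the quadratic form yields $(w,(I-C^2)^{-1}w)\leq(w,w)$. Alternatively one can diagonalize the symmetric positive-semidefinite matrix $-C^2=C^*C$, whose nonnegative eigenvalues make those of $I-C^2$ all at least $1$, so those of $(I-C^2)^{-1}$ lie in $(0,1]$. I expect the only genuinely delicate point to be the algebraic step identifying $\sym((I+C)^{-1})$ with $(I-C^2)^{-1}$: it relies on the commutation of $I\pm C$ and on correctly computing the adjoint of the resolvent, and getting the factorization in the right order is where a careless sign or ordering error would creep in. Everything else is routine once $-C^2=C^*C\geq 0$ is in hand.
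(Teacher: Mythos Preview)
Your proof is correct. The invertibility arguments and the final inequality match the paper's essentially line for line (the paper also argues that the eigenvalues of $I-C^2$ are at least $1$, so those of $(I-C^2)^{-1}$ lie in $(0,1]$).

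The one place you take a different route is the equality. The paper introduces auxiliary vectors $h=(I+C)^{-1}w$ and $g=(I-C^2)^{-1}w$, notes that $(I-C)g=h$, and then computes each side separately to find that both equal $(h,h)$. You instead prove the stronger operator identity $\sym\bigl((I+C)^{-1}\bigr)=(I-C^2)^{-1}$ directly, via $((I+C)^{-1})^*=(I-C)^{-1}$ and the factorizations $(I\pm C)^{-1}=(I\mp C)(I-C^2)^{-1}$, and then invoke $(w,Bw)=(w,\sym(B)w)$. Your route is a bit cleaner and yields more: it shows the two resolvent-type operators agree in their symmetric parts, not just in a single quadratic form. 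The paper's approach, on the other hand, avoids having to check commutation and adjoint formulas by working entirely at the level of vectors. Both are short and neither requires any idea the other lacks.
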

\begin{proof}
The invertibility of $I+C$ and $I-C^2$ is known, but we repeat it for completeness. Suppose that $I+C$ is not invertible. Then there exists $v\neq 0$ such that $(I+C)v=0$, so $v=-Cv$. Then $(v,v)=-(v,Cv)=0$, which is a contradiction. Similarly if $(I-C^2)v=0$, then $v=C^2v$, so $(v,v)=(v,C^2v)=-(Cv,Cv)\leq 0$, which is a contradiction.

Now let $w$ be arbitrary and set $g=(I-C^2)^{-1}w$ and $h=(I+C)^{-1}w$, which implies that $(I-C)g=h$. Then we see
\begin{equation*}
    (w,(I+C)^{-1}w) = ((I+C)h,h) = (h,h) + (Ch,h)=(h,h)
\end{equation*}
and
\begin{eqnarray*}
    (w,(I-C^2)^{-1}w) &=& ((I-C^2)g,g) = ((I+C)(I-C)g,g) = ((I+C)h,g) \\
    &=& (h,g)+(Ch,g)= (h,g)-(h,Cg) = (h,(I-C)g) = (h,h),
\end{eqnarray*}
which proves the equality.

To prove the inequality, first note that $-C^2$ is positive semidefinite. Therefore the eigenvalues of $I-C^2$ are greater than $1$, so the eigenvalues of $(I-C^2)^{-1}$ are between $0$ and $1$, so $\|(I-C^2)^{-1}\|\leq 1$, which implies that $(w,(I-C^2)^{-1}w) \leq (w,w)$. 
%{\color{orange}[Alternatively, since $-(1+x)^{-1}$ is matrix-monotone on $[0,\infty)$ and $-C^2\geq 0$, this implies that $-(I-C^2)^{-1}\geq -(I-0)^{-1}$, so $(w,(I-C^2)^{-1}w) \leq (w,w)$.]}
\end{proof}

Since we want to compare a Markov generator with its symmetric part (in $L^2(\mu)$), we recall some properties of this symmetric part. First of all, the symmetric part is again a Markov generator. Moreover, if the original generator has a unique ergodic measure, then the symmetric part generates a reversible process with the same unique ergodic measure. These properties are known, but for the reader's convenience we collect them with a proof in Lemma~\ref{lem:symgen} in the appendix.

Now we can prove the following proposition.
\begin{prop}\label{prop:revmax}
    Let $A$ be the generator of a Markov process on a finite state space with unique ergodic measure $\mu$. Then for all $v$ such that $\int v\dd\mu=0$
    \begin{equation*}
        (v,-A^{-1}v)\leq (v,-\sym(A)^{-1}v),
    \end{equation*}
    where $\sym(A) = (A+A^*)/2$ is the symmetric part of $A$ in $L^2(\mu)$.
    As a consequence, the diffusion coefficient~\eqref{eq:difcoef} is maximized for reversible state processes.
\end{prop}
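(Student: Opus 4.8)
The plan is to reduce the inequality to the purely linear-algebraic statement of Lemma~\ref{lemma:skewsym} by means of a symmetric square-root factorization of the generator. Throughout I would work on the subspace $V_0 = \{v \in L^2(\mu) : \int v\dd\mu = 0\}$, the orthogonal complement of the constants in $L^2(\mu)$, which is exactly where the admissible speed functions live.

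First I would record that $A$, $\sym(A)$ and $\asym(A)$ all map $V_0$ into itself and that $A$ and $\sym(A)$ are invertible there. Since $\mu$ is invariant we have $\int Af\dd\mu = 0$ for all $f$, i.e. $A^*\1 = 0$, so the range of $A$ lies in $V_0$; together with $A\1 = 0$ and finite-dimensionality this makes $A\colon V_0 \to V_0$ a bijection, so that $-A^{-1}v$ is unambiguous for $v \in V_0$. By Lemma~\ref{lem:symgen}, $\sym(A)$ is itself a Markov generator with the same unique ergodic measure $\mu$, hence $\ker\sym(A)$ consists only of constants and $-\sym(A)$ is symmetric and positive definite on $V_0$. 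Because $\sym(A)\1 = \tfrac12(A+A^*)\1 = 0$ and $\asym(A)\1 = 0$, both parts preserve $V_0$ as well, and $\asym(A)^* = -\asym(A)$.

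The heart of the argument is the factorization. On $V_0$ set $R = (-\sym(A))^{1/2}$, the unique symmetric positive-definite square root, and $Q = \asym(A)$. Then $-A = -\sym(A) - Q = R^2 - Q = R(I - C)R$, where $C := R^{-1}QR^{-1}$ satisfies $C^* = R^{-1}Q^*R^{-1} = -C$ and is therefore skew-symmetric. Inverting gives $-A^{-1} = R^{-1}(I-C)^{-1}R^{-1}$ on $V_0$, and writing $w = R^{-1}v$ and using that $R$ is symmetric,
\begin{equation*}
    (v, -A^{-1}v) = (R^{-1}v, (I-C)^{-1}R^{-1}v) = (w,(I-C)^{-1}w),
\end{equation*}
while $(v, -\sym(A)^{-1}v) = (v, R^{-2}v) = (w,w)$. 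Applying Lemma~\ref{lemma:skewsym} to the skew-symmetric matrix $-C$ (so that $I + (-C) = I - C$) yields $(w,(I-C)^{-1}w) \le (w,w)$, which is precisely the claimed inequality. The statement about the diffusion coefficient then follows at once from~\eqref{eq:difcoef}: the random walk part $2\kappa$ and the martingale part $\lambda\int v^2\dd\mu$ do not depend on the off-diagonal structure of $A$, while the active part $\tfrac{2\lambda^2}{\gamma}(v,-A^{-1}v)$ is maximized exactly when $A = \sym(A)$, i.e. in the reversible case.

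I expect the only genuine obstacle to be the bookkeeping that legitimizes the factorization, namely checking that $-\sym(A)$ is positive \emph{definite} (not merely semidefinite) on $V_0$ so that $R$ and $R^{-1}$ exist. This is where the hypothesis of a unique ergodic measure is indispensable, via Lemma~\ref{lem:symgen}: without uniqueness $\sym(A)$ could acquire extra kernel on $V_0$ and the square root would degenerate. Everything else is routine linear algebra once Lemma~\ref{lemma:skewsym} is available.
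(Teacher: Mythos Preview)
Your proof is correct and follows essentially the same route as the paper: both factor $-A$ on $V_0$ as $R(I\pm C)R$ with $R=(-\sym(A))^{1/2}$ and $C$ skew-symmetric, then invoke Lemma~\ref{lemma:skewsym}. The only differences are cosmetic (your $C$ is the negative of the paper's), and you are slightly more explicit about why $V_0$ is preserved and why $-\sym(A)$ is positive definite there.
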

\begin{proof}
Let $B=(-A+(-A)^*)/2$ be the symmetric part of $-A$ and $D=(-A-(-A)^*)/2$ the skew-symmetric part (in $L^2(\mu)$).
Let $v$ such that $\int v\dd\mu = 0$. Note that $B$ is (strictly) positive definite on the subspace of $w$ such that $\int w\dd\mu=0$, so $B^{-1}$ and $B^{-1/2}$ exist and are symmetric (in $L^2(\mu)$). Now we see
\begin{eqnarray*}
    &&(v,-A^{-1}v)= (v,(B+D)^{-1}v) 
    = (v,(B^{1/2}(I+B^{-1/2}DB^{-1/2})B^{1/2})^{-1}v) \\
    &=& (v,B^{-1/2}(I+B^{-1/2}DB^{-1/2})^{-1}B^{-1/2}v) 
    = (B^{-1/2} v, (I+B^{-1/2}DB^{-1/2})^{-1}B^{-1/2}v).
\end{eqnarray*}
Now write $w=B^{-1/2} v$ and $C=B^{-1/2}DB^{-1/2}$, so 
\begin{equation*}
    (v,-A^{-1}v) = (w,(I+C)^{-1}w).
\end{equation*}
Note that for all $u,u'$
\begin{eqnarray*}
    (u,Cu') &=& (u,B^{-1/2}DB^{-1/2}u') = (B^{-1/2}u,D B^{-1/2}u') \\
    &=& -(DB^{-1/2}u,B^{-1/2}u')=-(B^{-1/2}DB^{-1/2}u,u')=-(Cu,u'),
\end{eqnarray*}
so $C$ is skew-symmetric. Therefore applying Lemma~\ref{lemma:skewsym} gives us that
\begin{eqnarray*}
    (v,-A^{-1}v) &=& (w,(I+C)^{-1}w) \leq (w,w) \\
    &=& (B^{-1/2}v,B^{-1/2}v) = (v,B^{-1}v) = (v,-\sym(A)^{-1}v).
\end{eqnarray*}
\end{proof}

\begin{rmk}
If we assume that $\|B^{-1/2}DB^{-1/2}\|< 1$, we use the Taylor expansion and obtain the more explicit formula:
\begin{equation*}
    (v,-A^{-1}v) = (v,-\sym(A)v) + (w,C^2(I-C^2)^{-1}w),
\end{equation*}
where $w$ and $C$ are as in the proof of Proposition~\ref{prop:revmax}. Indeed in that case
\begin{eqnarray*}
    (w,(I+C)^{-1}w ) &=& \left(w,\sum_{n=0}^\infty (-1)^n C^nw\right) = \sum_{n=0}^\infty (-1)^n (w,C^nw) = \sum_{n=0}^\infty (-1)^{2n} (w,C^{2n}w) \\
    &=& \left(w,\sum_{n=0}^\infty (C^2)^n w\right) = (w,w) + \left(w,\sum_{n=1}^\infty (C^2)^n w\right)\\
    &=& (w,w) + \left(w,C^2\sum_{n=0}^\infty (C^2)^n w\right) = (v,-\sym(A)^{-1}v) + (w,C^2(I-C^2)^{-1}w).
\end{eqnarray*}
Note that in the third equality we used that $C^n$ is skew-symmetric, so $(w,C^nw)=0$ for $n$ odd.
\end{rmk}
Now that we have Proposition~\ref{prop:revmax} for active particles in $\R$, we can use it to generalize to $d$ dimensions. Recall from Theorem~\ref{thm:ddimcase} that the active part of the limiting diffusion matrix of an $\R^d$-valued random walk is $(2\lambda^2/\gamma) D^A$, where
\begin{equation*}
    D^A_{ij} := ((v)_i,-A^{-1}(v)_j)+((v)_j,-A^{-1}(v)_i).
\end{equation*}
The next proposition tells us that in the same context as Proposition~\ref{prop:revmax}, this quantity is optimal for the reversible process.
\begin{cor}\label{cor:revmax}
Let $A$ and $\mu$ be as in Proposition~\ref{prop:revmax}. Then for all $\R^d$-valued $v$ such that $\int v\dd \mu=0$ (in $\R^d$), $D^A$ is dominated by $D^{\sym(A)}$ in the sense that $D^{\sym(A)}-D^A$ is positive definite.
\end{cor}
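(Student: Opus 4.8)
The plan is to reduce the matrix inequality to the scalar inequality of Proposition~\ref{prop:revmax} by testing the difference $D^{\sym(A)}-D^A$ against an arbitrary vector $\xi\in\R^d$. Since $D^A$ and $D^{\sym(A)}$ are symmetric $d\times d$ matrices (each $D^A_{ij}$ is visibly symmetric in $i,j$), it suffices to show that $\xi^T(D^{\sym(A)}-D^A)\xi\geq 0$ for every $\xi$.

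First I would compute the quadratic form $\xi^T D^A\xi$. Writing out the definition of $D^A_{ij}$ and using that the resulting double sum is invariant under swapping the indices $i$ and $j$, the two terms coincide, so
\[
\xi^T D^A\xi = 2\sum_{i,j=1}^d \xi_i\xi_j\big((v)_i,-A^{-1}(v)_j\big).
\]
The key step is then to introduce the scalar-valued function $w:=\sum_{i=1}^d \xi_i (v)_i = \langle \xi, v\rangle$ on $\s$. By bilinearity of the inner product $(\cdot,\cdot)$ on $L^2(\mu)$ and linearity of the operator $-A^{-1}$, the double sum collapses to $\big(w,-A^{-1}w\big)$, so that $\xi^T D^A\xi = 2(w,-A^{-1}w)$, and identically $\xi^T D^{\sym(A)}\xi = 2(w,-\sym(A)^{-1}w)$.

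It remains to check that $w$ is an admissible test function for Proposition~\ref{prop:revmax}, i.e.\ that $\int w\,\dd\mu=0$; this is immediate from $\int v\,\dd\mu=0$ in $\R^d$, since $\int w\,\dd\mu = \langle\xi,\int v\,\dd\mu\rangle = 0$. Applying Proposition~\ref{prop:revmax} to the scalar function $w$ then gives $(w,-A^{-1}w)\leq (w,-\sym(A)^{-1}w)$, hence $\xi^T D^A\xi \leq \xi^T D^{\sym(A)}\xi$ for all $\xi$, which is exactly the claimed domination.

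I do not expect a genuine obstacle here, since the analytic content is already carried by Proposition~\ref{prop:revmax}; the only thing to verify carefully is that forming the linear combination $w=\langle\xi,v\rangle$ correctly linearizes the quadratic form and preserves the mean-zero constraint. One point worth flagging is the word ``positive definite'' in the statement: the argument yields positive \emph{semi}definiteness, with equality when $A=\sym(A)$ (the reversible case) and, more generally, in any direction $\xi$ for which the skew-symmetric part annihilates the associated $w$, so the conclusion should be read in the semidefinite sense.
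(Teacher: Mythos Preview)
Your proof is correct and essentially identical to the paper's: both test the quadratic form against an arbitrary vector (the paper calls it $\alpha$, you call it $\xi$), form the scalar function $\alpha\cdot v$ (your $w$), verify it has mean zero, and invoke Proposition~\ref{prop:revmax}. Your closing remark that the argument only yields positive \emph{semi}definiteness is a fair observation about the statement's wording.
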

\begin{proof}
It suffices to show that for all $\alpha\in \R^d$, $\alpha^TD^A\alpha\leq \alpha^TD^{\sym(A)}\alpha$. Let $\alpha\in \R^d$. Then $\alpha \cdot v$ is an $\R$-valued function such that $\int (\alpha \cdot v)\dd\mu = \alpha \cdot (\int v\dd\mu)=0$. Therefore, using Proposition~\ref{prop:revmax}, we see
\begin{eqnarray*}
    \alpha^TD^A\alpha &=& \sum_{i,j=1}^d \alpha_i\alpha_j(((v)_i,-A^{-1}(v)_j)+((v)_j,-A^{-1}(v)_i)) = 2 ((\alpha\cdot v),-A^{-1}(\alpha\cdot v))\\
    &\leq& 2 ((\alpha\cdot v),-\sym(A)^{-1}(\alpha\cdot v)) = \alpha^TD^{\sym(A)}\alpha.
\end{eqnarray*}
\end{proof}

\subsection{Comparing reversible processes}\label{subsec:revs}
Proposition~\ref{prop:revmax} tells us that among all generators with the same symmetric part, the symmetric part itself maximizes the diffusion coefficient of the active particle. Now one might wonder whether there are classes of reversible internal state processes that yield the same diffusion coefficient for each speed function $v$. The following lemma shows us that this is not the case.
\begin{lemma}
Let $A$ and $B$ be Markov generators with reversible measure $\mu$. Suppose that for every $v$ with $\int v\dd\mu=0$, $(v,-A^{-1}v)=(v,-B^{-1}v)$. Then $A=B$.
\end{lemma}
\begin{proof}
Define the following linear subspaces of $L^2(\mu)$:  $V_\mu:=\{v| \int v\dd\mu = 0\}$ and $V_1 = \{c\1|c\in \R\}$. Note that $V_\mu$ and $V_1$ are orthogonal in $L^2(\mu)$ and in fact $V_\mu$ is the orthogonal complement of $V_1$ in $L^2(\mu)$, so the action on $V_\mu$ and $V_1$ together fully define $A$ and $B$. Also note that $A$ and $B$ are $0$ on $V_1$ and are invertible when restricting to $V_\mu\rightarrow V_\mu$. It suffices to show that $A$ and $B$ are equal on $V_\mu$, so in turn it suffices to show that $A^{-1}$ and $B^{-1}$ are equal on $V_\mu$. For this let $v,w\in V_\mu$. Then
\begin{eqnarray*}
    (v,-A^{-1}w) &=& \frac{1}{2}((v+w),-A^{-1}(v+w))-(v,-A^{-1}v)-(w,-A^{-1}w)) \\
    &=& \frac{1}{2}((v+w),-B^{-1}(v+w))-(v,-B^{-1}v)-(w,-B^{-1}w)) = (v,-B^{-1}w).
\end{eqnarray*}
This shows that $A^{-1}=B^{-1}$ on $V_\mu$, so we conclude that $A=B$.
\end{proof}

Now that we know that different reversible processes cannot yield the same diffusion coefficients, it could still be that certain reversible processes yield larger diffusion coefficients than others. To answer this question, we need to normalise in some way. Otherwise if we replace the generator $A$ by $cA$ for some constant $c>1$, the diffusion coefficient is divided by that constant $c$, so $A$ trivially yields larger diffusion coefficients than $cA$. We normalise here by comparing reversible processes that have the same total jumping rate from each point.
The next lemma tells us that in that case no process strictly dominates all the others, it depends on the speed function $v$.
\begin{lemma}
Let $A$ and $B$ be Markov generators on a finite state space that are reversible with respect to $\mu$. Additionally assume that the total jump rate from each state is the same for $A$ and $B$. Then either $A=B$ or there exist $v,w\in V_\mu$ such that
\begin{equation*}
    (v,-A^{-1}v)>(v,-B^{-1}v)\hspace{0.5cm}\text{and}\hspace{0.5cm}(w,-A^{-1}w)<(w,-B^{-1}w).
\end{equation*}
\end{lemma}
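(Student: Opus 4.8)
The plan is to move everything onto the subspace $V_\mu := \{v\in L^2(\mu) : \int v\dd\mu = 0\}$, the orthogonal complement of the constants. Since $A$ and $B$ are generators that are self-adjoint in $L^2(\mu)$ (reversibility) with kernel exactly the constants, their restrictions to $V_\mu$ are negative definite; writing $P := -A$ and $Q := -B$ for these restrictions, $P$ and $Q$ are positive definite self-adjoint operators on $V_\mu$, and $(v,-A^{-1}v) = (v,P^{-1}v)$, $(v,-B^{-1}v)=(v,Q^{-1}v)$ for $v\in V_\mu$. Thus the assertion reduces to a single statement: if $A\neq B$, then the self-adjoint operator $P^{-1}-Q^{-1}$ on $V_\mu$ is indefinite, i.e.\ strictly positive on some $v$ and strictly negative on some $w$. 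The two vectors the lemma asks for are then exactly these $v$ and $w$.

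The crux is a trace computation applied to $C := A-B$. As a difference of two $\mu$-reversible generators, $C$ is self-adjoint in $L^2(\mu)$; it annihilates the constants (both generators do), hence preserves $V_\mu$, where it coincides with $Q-P$. The hypothesis that $A$ and $B$ have the same total jump rate out of every state means precisely that their diagonal entries agree, $A_{ii}=B_{ii}$, so $C$ has vanishing diagonal and therefore $\tr C = 0$. Because the trace of a self-adjoint operator equals the sum of its (real) eigenvalues, and $C$ contributes a $0$ eigenvalue on the constants, the eigenvalues of $C|_{V_\mu}$ sum to $0$. If $A\neq B$ then $C|_{V_\mu}\neq 0$, and a nonzero self-adjoint operator whose eigenvalues sum to zero must have at least one strictly positive and one strictly negative eigenvalue. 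Hence $Q-P$ is indefinite on $V_\mu$.

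It remains to transfer this indefiniteness from $Q-P$ to $P^{-1}-Q^{-1}$, and this is the one step I expect to require care. I would handle it by simultaneous diagonalization: since $P$ is positive definite, choose a $P$-orthonormal basis $\{f_i\}$ of $V_\mu$ solving the generalized eigenproblem $Qf_i = \lambda_i P f_i$ with $\lambda_i > 0$ and $(f_i,Pf_j)=\delta_{ij}$. In this basis the quadratic form of $Q-P$ is diagonal with entries $\lambda_i-1$, while in the dual basis $\{Pf_i\}$ the form of $P^{-1}-Q^{-1}$ is diagonal with entries $1-\lambda_i^{-1}$. Since $\lambda_i>0$, the numbers $\lambda_i-1$ and $1-\lambda_i^{-1} = (\lambda_i-1)/\lambda_i$ have the same sign, so by Sylvester's law of inertia $P^{-1}-Q^{-1}$ has the same inertia as $Q-P$ and is therefore indefinite as well. (Equivalently one may simply invoke the operator antitonicity of inversion on positive definite operators.)

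Finally, picking $v,w\in V_\mu$ on which $P^{-1}-Q^{-1}$ is respectively strictly positive and strictly negative, and recalling $P^{-1}=-A^{-1}$ and $Q^{-1}=-B^{-1}$ on $V_\mu$, yields $(v,-A^{-1}v)>(v,-B^{-1}v)$ and $(w,-A^{-1}w)<(w,-B^{-1}w)$, as claimed. The main obstacle is exactly the sign-preservation under inversion in the third paragraph; once one notices that the equal-jump-rate hypothesis is nothing but the diagonal-equality $A_{ii}=B_{ii}$, the trace argument producing the indefiniteness of $Q-P$ is elementary.
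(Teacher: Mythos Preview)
Your proof is correct and uses essentially the same ingredients as the paper's: the equal-jump-rate hypothesis is read as $A_{ii}=B_{ii}$, giving $\tr(A-B)=0$, and this zero-trace condition combined with self-adjointness forces indefiniteness unless $A=B$; the link between the quadratic forms of $Q-P$ and $P^{-1}-Q^{-1}$ is operator antitonicity of inversion. The only organizational difference is that the paper argues by contradiction (assuming $-A^{-1}\geq -B^{-1}$ on $V_\mu$, inverting to $-B\geq -A$, and then running the trace argument on $D(A-B)$ with $D=\mathrm{diag}(\mu_i)$ to land in the standard inner product), whereas you argue directly and stay in $L^2(\mu)$ throughout, spelling out the inertia-preservation step via simultaneous diagonalization. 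Your route is arguably a bit cleaner in that it avoids introducing $D$, but the mathematical content is the same.
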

\begin{proof}
Let $A$ and $B$ be as stated. Now assume that there are no $v,w\in V_\mu$ such that $(v,-A^{-1}v)>(v,-B^{-1}v)$ and $(w,-A^{-1}w)<(w,-B^{-1}w)$. 
Without loss of generality assume that for all $v\in V_\mu$, $(v,-A^{-1}v)\geq (v,-B^{-1}v)$. 
This implies that $-A^{-1}\geq -B^{-1}$ (in the sense that $-A^{-1}-(-B^{-1})$ is symmetric and positive definite on $V_\mu$). With the fact that $-A,-B$ are positive definite, this in turn implies that $-B\geq -A$, so $A - B\geq 0$ on $V_\mu$. Since also $Av=Bv=0$ for $v\in V_1$, this implies that $A-B\geq 0$ on $L^2(\mu)$. Now if we define $D$ to be the diagonal matrix with $D_{ii}=\mu_i$, then $D(A-B)\geq 0$ and symmetric with respect to the usual inner product in $\R^d$. Also, $A-B$ and (hence) $D(A-B)$ have zeroes on the diagonal (because of the equal jump rates), so the trace of $D(A-B)$ is $0$. Therefore the eigenvalues of $D(A-B)$ are non-negative and sum to $0$, so they are all $0$. This implies that $D(A-B)=0$, so $A=B$.
\end{proof}
\section{Large deviations}\label{sec:largedev}
In this section we derive a large deviation principle (LDP) for $\tfrac{1}{t}X_t$.
\footnote{For the definition of the Large Deviation Principle and for Varadhan's lemma and the Gaertner-Ellis theorem, see for intance~\cite{dembo1998zeitouni} or~\cite{den2008large}.} 
The active particle that we are studying is what is called a slow-fast system in the literature and a lot of research has already been done about its large deviations. Because of this it is not our goal here to present this result in the highest possible generality. We would rather see which formulas are obtained and study their behaviour, in particular the relation between the rate function and the reversibility of $M$. Therefore we reduce (as in Section~\ref{sec:revopt}) to the case where the state space $\s$ of $M$ is finite (and hence where $(v^\gamma,s\geq 0)$ is bounded).

\begin{rmk}
Note that we don't need anywhere in this section that $\int v\dd\mu=0$.
\end{rmk}

Since we will express the rate function for $\tfrac{1}{t}X_t$ in terms of the rate function of the empirical process corresponding to the underlying state process, we quickly recall some results that we will use. We write
\begin{equation*}
    \chi_t = \frac{1}{t}\int_0^t \delta_{M_s}\dd s
\end{equation*}
and denote by $P_t$ the distribution of $\chi_t$ in the space of probability measures on $\s$. Then we know from \cite{donsker1975asymptotic} that $(P_t,t\geq 0)$ satisfies an LDP with good rate function $I_e$ given by
\begin{equation}\label{eq:rateGeneral}
    I_e(\xi) = \sup_{u>0} \left(-\sum_{i=1}^n\xi_i \frac{(Au)_i}{u_i}\right).
\end{equation}
In case $A$ is symmetric, this reduces to
\begin{equation}\label{eq:ratesym}
    I_e(\xi) = (u,-Au),
\end{equation}
where $u_i = \sqrt{\xi_i/\mu_i}$ (note that we assumed that $\mu$ has full support, so $\mu_i>0$ for all $i$) and the inner product is (as usual) with respect to $\mu$.

\subsection{Large deviations rate function}
To obtain the large deviations rate function of $X_t/t$, we start by calculating the logarithmic moment generating function (log-mgf) of $X_t$: $F_t(\alpha) = \log \E\left[\e^{\alpha X_t}\right]$ for $\alpha\in \R^d$. To calculate it we first observe that by independence of $Y$ and the rest,
\begin{equation}\label{eq:logmgf}
    F_t(\alpha) = \log \E\left[\e^{\alpha \left(\sqrt{2\kappa}Y_t + \int_0^tv_s^\gamma\dd N_s\right)}\right]
    = \log \E\left[\exp(\alpha \sqrt{2\kappa}Y_t)\right] + \log \E\left[\exp\left(\alpha \int_0^tv_s^\gamma\dd N_s\right)\right].
\end{equation}
The first term is just the log-mgf of a simple random walk speeded up with a factor $2\kappa$. Therefore at time $t$ it equals the difference of two Poisson random variables with parameter $\kappa t$, so we obtain that 
\begin{equation*}
    \log \E\left[\exp(\alpha Y_{2\kappa t})\right] = \log (\exp(\kappa t (\e^{\alpha}-1))\exp(\kappa t (\e^{-\alpha}-1))) = 2\kappa t (\cosh(\alpha)-1).
\end{equation*}
To calculate the second term, we first condition on $v^\gamma=(v^\gamma_s,0\leq s\leq t)$ and obtain
\begin{eqnarray*}
    &&\E\left[\left.\exp\left(\alpha \int_0^tv_s^\gamma\dd N_s\right)\right|v^\gamma\right] = \lim_{n\rightarrow \infty} \E\left[\left. \exp\left(\alpha \sum_{i=0}^{n-1}v^\gamma_{s_i}(N_{s_{i+1}}-N_{s_i})\right)\right|v^\gamma\right] \\
    &=& \lim_{n\rightarrow \infty} \prod_{i=0}^{n-1} \E\left[ \exp\left(\alpha v^\gamma_{s_i}(N_{s_{i+1}}-N_{s_i})\right)|v^\gamma\right] = \lim_{n\rightarrow \infty} \prod_{i=0}^{n-1} \exp\left(\lambda\left(\e^{\alpha v_{s_i}^\gamma}-1\right)(s_{i+1}-s_i)\right) \\
    &=& \lim_{n\rightarrow \infty} \exp\left(\sum_{i=0}^{n-1} \lambda \left(\e^{\alpha v_{s_i}^\gamma}-1\right)(s_{i+1}-s_i))\right) = \exp\left(\lambda \int_0^t \left(\e^{\alpha v_{s}^\gamma}-1\right)\dd s\right).
\end{eqnarray*}
Therefore we see that the second term of~\eqref{eq:logmgf} equals
\begin{equation*}
    \log\E \exp\left(\lambda \int_0^t \left(\e^{\alpha v_{s}^\gamma}-1\right)\dd s\right).
\end{equation*}
We conclude that
\begin{equation}\label{eq:logmgfend}
    F_t(\alpha) = 2\kappa t (\cosh(\alpha)-1) + \log\E \exp\left(\lambda \int_0^t \left(\e^{\alpha v_{s}^\gamma}-1\right)\dd s\right).
\end{equation}

Now we can compute the large deviation free energy function $F(\alpha)$ as the limit of $F_t(\alpha)/t$.
We see for the first term that
\begin{equation}\label{eq:freeenergyfirst}
    \lim_{t\rightarrow \infty} \frac{2\kappa t (\cosh(\alpha)-1)}{t} = 2\kappa (\cosh(\alpha)-1).
\end{equation}
Now for the second term define $h_\alpha$ as a function on measures on $\s$ given by
\begin{equation*}
    h^\gamma_\alpha(\xi) = \frac{\lambda}{\gamma} \int_\s \left(\e^{\alpha v(x)}-1\right)\xi(\dd x).
\end{equation*}
This enables us to rewrite the second part of $F_t(\alpha)$ and use Varadhan's lemma to obtain
\begin{eqnarray*}
    &&\lim_{t\rightarrow\infty}\frac{1}{t} \log \E \exp\left(\frac{\lambda}{\gamma} \int_0^{\gamma t} \left(\e^{\alpha v(M_s)}-1\right) \dd s  \right) \\
    &=& \lim_{t\rightarrow\infty}\frac{1}{t} \log \E \exp\left(t\gamma \frac{\lambda}{\gamma} \int_\s \left(\e^{\alpha v(x)}-1\right) \left(\frac{1}{\gamma t}\int_0^{\gamma t} \delta_{M_s} \dd s\right)(\dd x)  \right) \\
    &=& \gamma \lim_{t\rightarrow\infty} 
    \frac{1}{\gamma t} \log \E \exp\left( \gamma t h^\gamma_\alpha(\chi_{\gamma t})\right) = \gamma \sup_{\xi} (h^\gamma_\alpha(\xi) - I_e(\xi)).
\end{eqnarray*}
Note the latter equals 
\begin{equation}\label{eq:freeenergysecond}
    \sup_{\xi}(\lambda(\varphi_\xi(\alpha)-1) - \gamma I_e(\xi)),
\end{equation}
where $\varphi_\xi(\alpha)=\int_\s\exp(\alpha v(x))\xi(\dd x)$ denotes the mgf of $v$ under $\xi$ evaluated at $\alpha$. Taking together~\eqref{eq:freeenergyfirst} and~\eqref{eq:freeenergysecond}, we conclude that
\begin{equation}\label{eq:freeenergy}
    F(\alpha) = \lim_{t\rightarrow\infty}\frac{F_t(\alpha)}{t} = 2\kappa (\cosh(\alpha)-1) + \sup_{\xi}(\lambda(\varphi_\xi(\alpha)-1) - \gamma I_e(\xi)).
\end{equation}
Using the Gaertner-Ellis theorem, we now obtain the large deviation principle for $X_t/t$ with rate function given by the Legendre transform of $F(\alpha)$:
\begin{equation*}
    I(x) = \sup_{\alpha}(\alpha x - F(\alpha)) = \sup_{\alpha}(\alpha x - 2\kappa (\cosh(\alpha)-1) - \sup_{\xi}(\lambda(\varphi_\xi(\alpha)-1) - \gamma I_e(\xi))).
\end{equation*}

\begin{rmk}
A very similar computation shows that a similar expression holds in the multidimensional case. Indeed, if we set $F_t(\alpha) = \log \E \exp(\alpha\indot X_t)$ for $\alpha\in \R^d$, we obtain
\begin{equation*}
    F(\alpha)=\lim_{t\rightarrow\infty}\frac{F_t(\alpha)}{t} = 2\kappa \sum_{i=1}^d (\cosh(\alpha_i)-1) + \sup_{\xi}(\lambda(\varphi_\xi(\alpha)-1) - \gamma I_e(\xi)),
\end{equation*}
where 
\begin{equation*}
    \varphi_\xi(\alpha)=\int_\s\e^{\alpha \indot v(x)}\xi(\dd x).
\end{equation*}
Then again we can take the Legendre transform to find the rate function $I$.
\end{rmk}

\begin{example}\label{ex:twostatesLDP}
We return to Example~\ref{ex:twostates} to obtain an explicit expression for the large deviations free energy function. Note that the state process is reversible with respect to the stationary measure $\mu=(1/2,1/2)$. Using~\eqref{eq:ratesym}, fixing a probability measure $\xi$ on $\{\pm1\}$ and setting $u_i=\sqrt{(\xi_i/(1/2)}=\sqrt{2\xi_i}$, we see
\begin{equation*}
    I_e(\xi) = (u,-Au) = \frac{1}{2}(\sqrt{2\xi_1}-\sqrt{2\xi_{-1}})^2 =  (\sqrt{\xi_1}-\sqrt{\xi_{-1}})^2 = 1 - 2\sqrt{\xi_1\xi_{-1}}.
\end{equation*}
Parametrizing $\xi=(r,1-r)$, we see 
\begin{eqnarray*}
    \sup_{\xi}(\lambda(\varphi_\xi(\alpha)-1) - \gamma I_e(\xi)) &=& \sup_{0\leq r\leq 1} (\lambda(r \e^\alpha + (1-r)\e^{-\alpha} - 1) -\gamma (1-2\sqrt{r(1-r})) \\
    &=& \lambda(\e^{-\alpha}-1) - \gamma + \sup_{0\leq r\leq 1} (2\lambda \sinh(\alpha)r +2\gamma \sqrt{r(1-r)}).
\end{eqnarray*}
A simple calculation shows that the latter equals
\begin{equation*}
    \lambda(\e^{-\alpha}-1) - \gamma + \sqrt{\gamma^2+\lambda^2\sinh^2(\alpha)} + \lambda \sinh(\alpha) = \lambda(\cosh(\alpha)-1) + \sqrt{\gamma^2+\lambda^2\sinh^2(\alpha)} - \gamma,
\end{equation*}
so with~\eqref{eq:freeenergy}, we see
\begin{equation}\label{eq:twostatesfreeenergy}
    F(\alpha) = (2\kappa + \gamma) (\cosh(\alpha)-1) +  \sqrt{\gamma^2+\lambda^2\sinh^2(\alpha)} - \gamma.
\end{equation}

\end{example}

\begin{rmk}
In the case of $X^c$ (from Remark~\ref{rem:Xcont}), the calculations become a bit easier. Instead of the symmetric random walk $Y_{2\kappa t}$ we directly work with the continuous limit $B_{2\kappa t}$. But more importantly, there is no additional randomness from the Poisson process $N$. Following the analogous computations for this part, we find the same results with $\varphi_\xi(\alpha)$ replaced by $\alpha \int v(x) \xi(\dd x).$ 

In this section we worked with a finite state space, so all the computations and quantities here are well-defined. However, for a more general state process, for the original process $X$ one would need 
\begin{equation*}
    \E \exp\left(\lambda \int_0^t \left(\e^{\alpha v_{s}^\gamma}-1\right)\dd s\right)<\infty
\end{equation*}
to get a finite free energy. Setting $t\ll 1$, this implies that we need something like
\begin{equation*}
    \E \e^{\e^{v_0}}<\infty,
\end{equation*}
which is a very strong assumption that for instance for the Ornstein-Uhlenbeck process is not satisfied. 

Changing to $X^c$ means getting rid of the Poisson jumps, which takes away one of the exponentials. So we expect that an LDP holds for a lot more state processes in the $X^c$ case than for the original process $X$.
\end{rmk}

\subsection{The role of reversibility}
Our goal now is to show a result that is similar to Proposition~\ref{prop:revmax}. Indeed, we show that if an active particle has a state process generated by some generator $A$, then the rate function of this active particle is greater (pointwise) than the rate function of the active particle of which the state process is generated by the symmetric part of $A$. In other words: a reversible state process yields a lower rate function. Before we show this, we will prove the following lemma about a similar result for the rate functions of the empirical measures corresponding to the state processes.

\begin{lemma}
Let $A$ be a Markov generator with unique ergodic measure $\mu$ and let $\sym(A)$ be its symmetric part (in $L^2(\mu)$). Denote the rate functions of the corresponding empirical processes by $I_e^A$ and $I_e^{\sym(A)}$, respectively. Then for all probability measures $\xi$, $I_e^{\sym(A)}(\xi)\leq I_e^A(\xi)$.
\end{lemma}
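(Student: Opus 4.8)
The plan is to read off the inequality directly from the Donsker--Varadhan variational formula \eqref{eq:rateGeneral}, which holds for the general (non-symmetric) generator $A$, by feeding into it the test function that is optimal for the symmetric problem. First I would invoke Lemma~\ref{lem:symgen} to note that $\sym(A)$ is itself a Markov generator with the same unique ergodic measure $\mu$; this is what licenses the reduced formula \eqref{eq:ratesym} for $\sym(A)$, namely that for a fixed probability measure $\xi$ the choice $u_i=\sqrt{\xi_i/\mu_i}$ gives $I_e^{\sym(A)}(\xi)=(u,-\sym(A)u)$.

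Next I would evaluate the bracket in \eqref{eq:rateGeneral} for $A$ at this same $u$. Since $\xi_i=\mu_i u_i^2$, the weights combine as
\[
-\sum_i \xi_i\frac{(Au)_i}{u_i}=-\sum_i \mu_i u_i (Au)_i=-(u,Au),
\]
and because the antisymmetric part of $A$ contributes nothing here (as $(u,A^*u)=(u,Au)$ for real $u$) we have $(u,Au)=(u,\sym(A)u)$. Hence the value of the functional at this particular $u$ equals $(u,-\sym(A)u)=I_e^{\sym(A)}(\xi)$. As $I_e^A(\xi)$ is the supremum of that functional over all admissible $u$, selecting this one choice already yields $I_e^A(\xi)\geq I_e^{\sym(A)}(\xi)$, which is the assertion.

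The one point requiring care, and the only real obstacle, is admissibility: the supremum in \eqref{eq:rateGeneral} ranges over strictly positive $u$, whereas $u_i=\sqrt{\xi_i/\mu_i}$ vanishes wherever $\xi_i=0$. I would resolve this by approximating with $u^{(k)}=u+\tfrac{1}{k}\1$, which is strictly positive for every $k$. In the sum $-\sum_i \xi_i (Au^{(k)})_i/u^{(k)}_i$ each index $i$ with $\xi_i=0$ contributes zero regardless of $u^{(k)}_i$, while for $\xi_i>0$ the ratio is continuous in $u$; letting $k\to\infty$ therefore recovers $-(u,Au)=I_e^{\sym(A)}(\xi)$. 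Since $I_e^A(\xi)\geq -\sum_i \xi_i (Au^{(k)})_i/u^{(k)}_i$ for each $k$, passing to the limit gives the claim in full generality.
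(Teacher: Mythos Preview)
Your proof is correct and follows essentially the same approach as the paper: plug the symmetric optimizer $u_i=\sqrt{\xi_i/\mu_i}$ into the Donsker--Varadhan variational formula for $A$, observe that the resulting value equals $(u,-\sym(A)u)=I_e^{\sym(A)}(\xi)$, and approximate to handle the positivity constraint. The only cosmetic difference is the regularization: the paper replaces just the zero entries of $u$ by $1/m$ (which makes $\xi_i/u^m_i=\mu_i u_i$ hold exactly for every $m$), whereas you add $\tfrac{1}{k}\1$ to all entries; both work, and in fact since $A\1=0$ your choice gives $Au^{(k)}=Au$ identically, so the limit is only needed in the denominator.
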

\begin{proof}
Let $\xi$ be a probability measure on $\s$. We set $u_i=\sqrt{\xi_i/\mu_i}\geq 0$. Also define for $m\in \N$, $u^m_i = u_i$ if $u_i>0$ and $u^m_i=1/m$ otherwise. Note that $u^m_i>0$ for all $i$ and that $u^m\rightarrow u$ in $L^2(\mu)$ (since it converges pointwise and $\s$ is finite). Finally note that $\xi_i/u^m_i = \mu_i u_i$ for all $i$. Now, using~\eqref{eq:rateGeneral}, we see that for all $m$
\begin{equation*}
    I^A_e(\xi) = \sup_{u'>0}-\sum_i^n\xi_i \frac{(Au')_i}{u'_i} \geq -\sum_i^n\xi_i \frac{(Au^m)_i}{u^m_i} = -\sum_{i=1}^n \mu_i u_i (Au^m)_i = (u,-Au^m).
\end{equation*}
Therefore, using~\eqref{eq:ratesym}, we conclude
\begin{equation*}
    I^A_e(\xi)\geq \lim_{m\rightarrow\infty} (u,-Au^m) = (u,-Au) = (u,-\sym(A)u) = I_e^{\sym(A)}(\xi).
\end{equation*}
\end{proof}
Now we use this to prove the following result.
\begin{cor}\label{cor:revmaxDV}
Let $A$ be a Markov generator with unique ergodic measure $\mu$ and let $\sym(A)$ be its symmetric part (in $L^2(\mu)$). Denote the rate functions of the corresponding active particle processes by $I^A$ and $I^{\sym(A)}$ and the free energy functions of those processes by $F^A$ and $F^{\sym(A)}$, respectively. Then for all $\alpha\in \R: F^A(\alpha)\leq F^{\sym(A)}(\alpha)$ and for all $x\in \R: I^{\sym(A)}(x)\leq I^A(x)$.
\end{cor}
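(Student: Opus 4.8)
The plan is to read off both inequalities directly from the free energy formula~\eqref{eq:freeenergy} together with the lemma just proved, exploiting the fact that the two active particles differ only in the generator of their state process. First I would observe that in~\eqref{eq:freeenergy} the generator enters the free energy function \emph{only} through the term $-\gamma I_e(\xi)$ inside the supremum: the remaining ingredients, namely $2\kappa(\cosh(\alpha)-1)$ and $\varphi_\xi(\alpha)=\int_\s \e^{\alpha v(x)}\xi(\dd x)$, depend only on $\kappa$, $\lambda$, $\gamma$, the speed function $v$ and the measure $\xi$, all of which are shared between the process generated by $A$ and the one generated by $\sym(A)$. This reduces the free energy comparison entirely to the comparison of the Donsker--Varadhan rate functions $I_e^A$ and $I_e^{\sym(A)}$.

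For the free energy inequality I would fix $\alpha\in\R$ and compare the two suprema term by term. Since $\gamma>0$ and the preceding lemma gives $I_e^{\sym(A)}(\xi)\leq I_e^A(\xi)$ for every probability measure $\xi$, we have $-\gamma I_e^{\sym(A)}(\xi)\geq -\gamma I_e^A(\xi)$, and therefore
\begin{equation*}
    \lambda(\varphi_\xi(\alpha)-1)-\gamma I_e^{\sym(A)}(\xi)\geq \lambda(\varphi_\xi(\alpha)-1)-\gamma I_e^A(\xi)
\end{equation*}
for each $\xi$. Taking the supremum over $\xi$ preserves this pointwise inequality, and adding the common term $2\kappa(\cosh(\alpha)-1)$ yields $F^{\sym(A)}(\alpha)\geq F^A(\alpha)$ for all $\alpha$, which is the first assertion.

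For the rate functions I would use that each is the Legendre transform of the corresponding free energy function, $I(x)=\sup_{\alpha}(\alpha x-F(\alpha))$, and that the Legendre transform is order-reversing. Concretely, from $F^A(\alpha)\leq F^{\sym(A)}(\alpha)$ for all $\alpha$ we obtain $\alpha x-F^A(\alpha)\geq \alpha x-F^{\sym(A)}(\alpha)$ for all $\alpha$ and $x$; taking the supremum over $\alpha$ then gives $I^A(x)\geq I^{\sym(A)}(x)$ for every $x$, the second assertion.

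I expect no serious obstacle: the argument is essentially a monotonicity computation, with the genuine content already carried by the preceding lemma and by the derivation of~\eqref{eq:freeenergy}. The only points requiring minor care are that $\gamma>0$ (so that multiplication by $-\gamma$ reverses the inequality) and that the finiteness of the state space $\s$ guarantees both free energy functions are finite and that the Gaertner--Ellis argument underlying~\eqref{eq:freeenergy} applies to both processes, so that $I^A$ and $I^{\sym(A)}$ are indeed the Legendre transforms of $F^A$ and $F^{\sym(A)}$.
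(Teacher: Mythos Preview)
Your proposal is correct and follows essentially the same approach as the paper: use the preceding lemma's inequality $I_e^{\sym(A)}(\xi)\leq I_e^A(\xi)$ inside the supremum formula~\eqref{eq:freeenergy} to get $F^{\sym(A)}\geq F^A$, then pass to Legendre transforms to obtain $I^{\sym(A)}\leq I^A$. Your write-up is in fact more explicit than the paper's (you spell out the role of $\gamma>0$ and the order-reversal of the Legendre transform), but the underlying argument is identical.
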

\begin{proof}
Since for all $\xi$, $I_e^{\sym(A)}(\xi)\leq I_e^A(\xi)$, it follows that for all $\alpha$, \begin{equation*}
    \sup_{\xi}(\lambda(\varphi_\xi(\alpha)-1) - \gamma I^{\sym(A)}_e(\xi)) \geq \sup_{\xi}(\lambda(\varphi_\xi(\alpha)-1) - \gamma I^A_e(\xi)),
\end{equation*}
so $F^{\sym(A)}(\alpha)\geq F^A(\alpha)$. Since this holds for all $\alpha$, similary it follows that for all $x$, $I^{\sym(A)}(x)\leq I^A(x)$.
\end{proof}
\begin{rmk}
Note that in the case that $F(\alpha)$ is sufficiently smooth, the limiting diffusion coefficient (or matrix, in the higher dimensional case) is given by the second derivative, or, more generally, the Hessian of $F(\alpha)$ in $0$. By Corollary~\ref{cor:revmaxDV}, the free energy function is dominated by the free energy function of the active particle with state process generated by the symmetric part pointwise everywhere and they are equal for $\alpha=0$. Therefore we see in that case that the Hessian at $0$ (and therefore the limiting diffusion matrix) is dominated by the Hessian of the symmetric version. This is consistent with the results of Proposition~\ref{prop:revmax} and Corollary~\ref{cor:revmax}.
\end{rmk}
\section{The \texorpdfstring{$2$}{2}-state case: explicit formulas}\label{sec:twostates}
In the case where there are just two states, we can compute a lot of things explicitly with different methods. Therefore this section is dedicated to the active particle with two states. In this case the active particle has a position $x\in\Z$ and a velocity $v\in \{-1,1\}$.
The process $\{(X_t,v_t): t\geq 0\}$ is described via the generator
\beq\label{gena}
Lf(x,v) &=&  \lambda (f(x+v,v)- f(x,v)) \nonumber\\
&+& \kappa (f(x+1, v)+ f(x-1,v) -2f(x,v))
\nonumber\\
&+ & \gamma(f(x,-v)-f(x,v))
\eeq
This is interpreted as follows: with rate $\lambda$ the process makes a jump in the direction of the velocity,
with rate $\kappa$ it makes a random walk jump and with rate $\gamma$ it flips velocity $v\to -v$.
If we denote $\mu(x,t,v)$ the probability to be at location $x\in\Z$ with velocity $v\in \{-1,1\}$ at time $t>0$, the generator
\eqref{gena} corresponds to the master equation (or Kolmogorov forward equation)
\beq\label{mastereqi}
\frac{d\mu(x,t,v)}{dt} &=& \lambda \mu(x-v,t,v) + \kappa (\mu(x-1,t, v)+ \mu(x+1,t,v)) + \gamma \mu(x,t,-v)
\nonumber\\
&-& (2\kappa+\lambda + \gamma) \mu(x,t,v).
\eeq

\subsection{The Fourier Laplace transform of the distribution}
The master equation \eqref{mastereqi} can be solved using Fourier-Laplace transform.
We define
\begin{equation*}
    \hat{\mu} (q,t,v)= \sum_{x} e^{iqx} \mu(x,t,v)
\end{equation*}
and view this quantity as a two-column, denoted $\overline{\mu}(q,t, \cdot)$ indexed by row index $v=1,-1$. The master equation \eqref{mastereqi} then becomes, after Fourier transform:
\be\label{bonk}
\frac{d}{dt} \overline{\mu}(q,t) = M(q) \overline{\mu}(q,t)
\ee
with $M(q)$ a symmetric two by two matrix of the form
\beq\label{M}
&&M(q)=
\left(
\begin{array}{cc}
a & b\\
b & a^*
\end{array}
\right)
\eeq
where $*$ denotes complex conjugate and where
\begin{eqnarray}\label{abb}
    a &=& (2\kappa +\lambda)(\cos(q)-1)-\gamma + i\lambda\sin(q) \nonumber\\
    b &=& \gamma.
\end{eqnarray}

For the analysis of the scaling behavior of the position of the particle, it is convenient to further Laplace transform $\omu (q,t)$
i.e., we define, for $z>0$ the column vector
\be\label{lapma}
\homu(q,z)= \int_0^\infty \omu(q,t) e^{-zt}\ dt.
\ee
Then, from \eqref{bonk} we find
\[
\homu(q,z) = ( zI- M(q) )^{-1} {\bar{\mu}}_0 (q).
\]
For the initial position and velocity we choose $X_0=0$, and $v=\pm 1$ with probability $1/2$.
Then we have, ${\bar{\mu}}_0 (q)= \frac12 (1,1)^T$ where $T$ denotes transposition.
We further define the Fourier Laplace transform of the distribution of the  particle position:
\begin{equation*}
    S(q,z)=\int_0^\infty \E e^{iqX_t} e^{-zt} \ dt = \sum_v \homu(q,z,v)=(1,1) \homu(q,z).
\end{equation*}

Then we have, using \eqref{lapma}
\begin{equation*}
    S(q,z)= (\homu(q,z,1)) + (\homu(q,z, -1)) =
\frac12 (1,1) (zI-M(q))^{-1} (1,1)^T.
\end{equation*}

Using the explicit formulas \eqref{M}, \eqref{abb}, we obtain
\be\label{sq}
S(q,z)= \frac{2\gamma + z - (\lambda +2\kappa) (\cos(q)-1)}{(\gamma +z -(\lambda+2\kappa) (\cos(q)-1))^2-\gamma^2 +  \lambda^2 \sin^2(q)}.
\ee
For a more general velocity distribution at time zero, i.e., $X_0=0$, and $v=1$, resp.\ $v=-1$, with probability $\alpha$, resp. $1-\alpha$, we find
\begin{equation*}
    S(q,z)= \frac{i\lambda (2\alpha -1) \sin(q)+ 2\gamma + z - (\lambda +2\kappa) (\cos(q)-1)}{(\gamma +z -(\lambda+2\kappa) (\cos(q)-1))^2-\gamma^2 +  \lambda^2 \sin^2(q)}.
\end{equation*}

\subsection{The limiting diffusion coefficient}
We can now use the explicit formula \eqref{sq} to obtain
the limit distribution of
$\epsi X_{\epsi^{-2} t}$ as $\epsi\to 0$. This amounts to
understand the scaling behavior of $\epsi^2 S(\epsi q, \epsi^2 z)$.
In particular  $\epsi X_{\epsi^{-2} t}\to \caN(0,\si^2 t)$ as $\epsi\to 0$ (in distribution), where $\caN(0,\si^2 t)$
denotes a normal with mean zero and variance $\si^2 t$, corresponds to the limiting
scaling behavior
\[
\lim_{\epsi\to 0} \epsi^2 S(\epsi q, \epsi^2 z)=  \frac{1}{z + \frac{q^2}{2}\si^2}.
\]
If we obtain this scaling behavior, we call $\si^2$ the (limiting) diffusion constant.
Indeed, we compute from the exact formula \eqref{sq}
\begin{equation*}
    \lim_{\epsi\to 0} \epsi^2 S(\epsi q, \epsi^2 z) = \frac{1}{z + \frac{q^2}{2}\si^2}
\end{equation*}
with the limiting diffusion constant
\be\label{difcon}
\si^2= 2\kappa +\lambda + \frac{\lambda^2}{\gamma}.
\ee
This is consistent with the limiting diffusion coefficient that we obtained in Example~\ref{ex:twostates}.
\subsection{Moment generating function and large deviations}
We choose the starting point $X_0=0$ and with random initial velocity, i.e., $v=\pm 1$ with probability $1/2$.
This allows us to compute the moment generating function via
\beq\label{momgen}
\E( e^{\alpha X_t}) & = & \frac12(1,1)e^{t M(-i \alpha)} (1,1)^T.
\eeq
This amounts to compute the exponential of the matrix $M(q)$ from \eqref{M} which can be done using
diagonalization, and results in
\begin{equation*}
    e^{t M(q)}= \frac{e^{tA}}{2\gamma B} G(t,q)
\end{equation*}
where $G(t,q)$ is given by the symmetric two by two matrix
\begin{equation*}
    G(t,q)=
\left(
\begin{array}{cc}
A_{11} & A_{12}\\
A_{12} & A_{11}^*
\end{array}
\right)
\end{equation*}
where
\begin{eqnarray*}
    A_{11} &=& -2\gamma\lambda i \sin(k) \sinh (Bt) + 2\gamma B \cosh (tB) \\
    A_{12} &=& 2\gamma^2 \sinh(tB)
\end{eqnarray*}
and where
\begin{eqnarray*}
    A &=& (\cos(k)-1)(2\kappa + \lambda) -\gamma \\
    B &=& \sqrt{\gamma^2-\lambda^2 \sin^2(k)}.
\end{eqnarray*}

Moreover, we see from~\eqref{momgen} that the free energy function
\be
F(\alpha) = \lim_{t\to\infty} \frac1t\log\E \left(e^{\alpha X_t}\right)\nonumber\\
\ee
is equal to the largest eigenvalue of the symmetric matrix $M(-i\alpha)$, which is explicitly given by
\begin{equation*}
    M(-i\alpha)=
\left(
\begin{array}{cc}
(2\kappa+\lambda) (\cosh(\alpha)-1) + \lambda\sinh(\alpha) -\gamma & \gamma\\
\gamma & (2\kappa+\lambda) (\cosh(\alpha)-1) - \lambda\sinh(\alpha)-\gamma
\end{array}
\right).
\end{equation*}
This gives
\begin{equation}\label{free}
F(\alpha)= (2\kappa +\lambda)(\cosh (\alpha) -1)
+   \sqrt{\gamma^2 + \lambda^2\sinh^2(\alpha)}-\gamma,
\end{equation}
which agrees with~\eqref{eq:twostatesfreeenergy}.

Let us look at three relevant limiting cases for the ``free energy function'' $F$ from \eqref{free}.
\ben
\item[a)] Expanding the free energy function $F$ around $\alpha\approx 0$ gives
\[
F(\alpha)= \frac12D\alpha^2 + O(\alpha^4)
\]
with $D= 2\kappa + \lambda + \tfrac{\lambda^2}{\gamma}$.
This is consistent with the diffusion constant found in Example~\ref{ex:twostates} and in~\eqref{difcon}.
The function $F(\alpha)$ in \eqref{free} can be analytically extended in a neighborhood of
the origin in the complex plane, and as a consequence, we can reobtain the central limit theorem (which we found via the scaling behavior of the characteristic function) from the large deviation
free energy, see \cite{bryc1993remark}.
\item[b)] In the limit $\gamma\to\infty$ the free energy function
becomes
\[
F(\alpha)= (\cosh(\alpha)-1)(2\kappa + \lambda)
\]
which corresponds to the large deviations of a symmetric random walk jumping
with rates $\kappa +\lambda/2$ to the right or left. This is indeed the (slow-fast) scaling limit of the process as we saw before. For large values of $\gamma$ we have
\[
F(\alpha)= (\cosh(\alpha)-1)(2\kappa + \lambda) + \frac{\lambda^2}{2\gamma} \sinh^2(\alpha)  + o(1/\gamma)
\]
Remark also that $F$ in \eqref{free} is non-increasing as a function of $\gamma$.
\item[c)] In the continuum limit we rescale $\lambda\to \epsi\lambda$, $\gamma\to \epsi^2\gamma$, $X_t\to \epsi X_{\epsi^{-2} t}$, we find
\be\label{dublim}
\lim_{\epsi\to 0}\lim_{t\to\infty}\frac1{t}\log  \E^{\epsi\lambda, \epsi^2\gamma}\left( e^{\alpha \epsi X_{\epsi^{-2} t}}\right) = \kappa\alpha^2 + \sqrt{\gamma^2+ \lambda^2 \alpha^2}-\gamma^2
\ee
which corresponds to the large deviation free energy of the continuum model (see also \cite{pietzonka2016extreme}), i.e., the limits $\epsi\to 0$ and
$t\to \infty$ in \eqref{dublim} commute.
\een

\appendix
\section{Appendix}
\subsection{Integral approximation in \texorpdfstring{$L^2(\p)$}{L2}}
Firs we show how it follows from Assumption~\eqref{eq:assumptioncontL2} that the integral~\eqref{eq:Xtdefinition} is well-defined. An alternative more abstract way to establish the well-definedness of this integral is as follows. From Assumption~\eqref{eq:assumptioncontL2} follows that $v^\gamma_s$ admits a c\`adl\`ag version and hence the integral can be interpreted as an ordinary Riemann-Stieltjes integral of a c\`adl\`ag function against an integrator of bounded variation.
\begin{lemma}\label{lemma:L2approxintegral}
Let $W_s$ be $N_s$, $\overline N_s$ or $\lambda s$. Then Assumption~\eqref{eq:assumptioncontL2} implies that
\begin{equation}
    \lim_{n\rightarrow \infty} \sum_{i=1}^n v^\gamma_{s_i}(W_{s_{i+1}}-W_{s_i}) =: \int_0^t v^\gamma_s\dd W_s,
\end{equation}
exists as a limit in $L^2(\p)$ and the $s_i=s_i^n$ are partitions of $[0,t]$ of which the mesh sizes go to $0$.
\end{lemma}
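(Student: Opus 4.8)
The plan is to show that the approximating Riemann--Stieltjes sums form a Cauchy sequence in $L^2(\p)$ and then invoke completeness of $L^2(\p)$. For a partition $\pi = \{0 = s_0 < \dots < s_n = t\}$ write $S_\pi^W = \sum_{i=0}^{n-1} v^\gamma_{s_i}(W_{s_{i+1}} - W_{s_i})$ for the associated sum. Since $N_s = \overline N_s + \lambda s$ and the sums are linear in the integrator, $S_\pi^N = S_\pi^{\overline N} + S_\pi^{\lambda s}$, so it suffices to treat the cases $W_s = \overline N_s$ and $W_s = \lambda s$ separately and add the two limits. In both cases I would compare two partitions $\pi,\pi'$ with their common refinement $\rho$ and bound $\|S_\pi^W - S_\rho^W\|_{L^2(\p)}$ in terms of the modulus
\[
\omega(\delta) := \sup_{\substack{0 \le s, s' \le t \\ |s-s'|<\delta}} \E\big[(v^\gamma_s - v^\gamma_{s'})^2\big],
\]
which tends to $0$ as $\delta \downarrow 0$ by Assumption~\eqref{eq:assumptioncontL2} (applied to $v$ on $[0,\gamma t]$ and transported through the time rescaling $v^\gamma_s = v_{\gamma s}$). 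When $\rho$ refines $\pi$, the difference $S_\pi^W - S_\rho^W$ is a sum over the fine intervals $[t_k,t_{k+1}]$ of $\rho$ of terms $(v^\gamma_{\ell(k)} - v^\gamma_{t_k})(W_{t_{k+1}} - W_{t_k})$, where $\ell(k)$ is the left endpoint of the $\pi$-interval containing $[t_k,t_{k+1}]$, so that $|\ell(k) - t_k| \le \mesh(\pi)$.

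For the deterministic integrator $W_s = \lambda s$ I would apply the triangle inequality in $L^2(\p)$,
\[
\big\|S_\pi^{\lambda s} - S_\rho^{\lambda s}\big\|_{L^2(\p)} \le \lambda \sum_k \big\|v^\gamma_{\ell(k)} - v^\gamma_{t_k}\big\|_{L^2(\p)}\,(t_{k+1} - t_k) \le \lambda t\,\sqrt{\omega(\mesh(\pi))},
\]
which vanishes as $\mesh(\pi) \to 0$. For the martingale integrator $W_s = \overline N_s$ I would instead expand the square $\E[(S_\pi^{\overline N} - S_\rho^{\overline N})^2]$. The crucial structural inputs here are that $v^\gamma$ is independent of $\overline N$ and that the increments of $\overline N$ over the disjoint intervals of $\rho$ are uncorrelated with mean zero; together these make every off-diagonal term factorize into a product containing a vanishing $\overline N$-factor, so that only the diagonal survives:
\[
\E\big[(S_\pi^{\overline N} - S_\rho^{\overline N})^2\big] = \sum_k \E\big[(v^\gamma_{\ell(k)} - v^\gamma_{t_k})^2\big]\,\lambda(t_{k+1} - t_k) \le \omega(\mesh(\pi))\,\lambda t,
\]
again vanishing with the mesh. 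Combining via $\|S_\pi^W - S_{\pi'}^W\| \le \|S_\pi^W - S_\rho^W\| + \|S_{\pi'}^W - S_\rho^W\|$ then shows that any sequence of sums with mesh tending to $0$ is Cauchy in $L^2(\p)$, and the same estimate shows the limit does not depend on the chosen sequence.

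I expect the main obstacle to be the careful bookkeeping in the refinement step: correctly identifying the evaluation point $v^\gamma_{\ell(k)}$ that the coarse sum contributes on each fine subinterval, and rigorously justifying that the off-diagonal terms vanish in the $\overline N$ case. The latter rests on combining independence of $v^\gamma$ and $\overline N$ with the orthogonality of compensated-Poisson increments over disjoint intervals; once this factorization is secured, the remainder reduces to invoking the modulus $\omega$ and summing the interval lengths to $t$.
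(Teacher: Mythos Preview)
Your proposal is correct and follows essentially the same route as the paper: reduce by linearity to the two integrators $\overline N$ and $\lambda s$, compare a partition with a refinement, use independence of $v^\gamma$ and $\overline N$ together with the mean-zero orthogonal increments of $\overline N$ to kill the off-diagonal terms in the martingale case, bound the deterministic case by the $L^2$-modulus $\omega$, and conclude via the common-refinement triangle inequality. The only cosmetic difference is that for $W_s=\lambda s$ you invoke Minkowski's inequality directly, whereas the paper expands the square and applies Cauchy--Schwarz to each cross term; both give a bound that vanishes with the mesh.
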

\begin{proof}
Without loss of generality, set $\gamma=1$. By linearity it suffices to let $W$ be either $\overline N$ or $W_s=s$. By the completeness of $L^2(\p)$, it suffices to show that for each $\epsilon>0$ there exists $\delta>0$ such that if the meshes of two partitions are under $\delta$, the $L^2$-distance between the corresponding Riemann sums is smaller than $\epsilon$. Indeed, this implies both that for each sequence of partitions (with mesh going to $0$), the Riemann sums form a Cauchy sequence in $L^2(\p)$ (and hence has a limit) and that every sequence of partitions yields the same limit. 

Let $\epsilon>0$. Choose $\delta>0$ such that for all $0\leq s,s'\leq t$ with $|s-s'|<\delta$, $\E[(v_s-v_{s'})^2]\leq \epsilon$.

Now let $s^1=(s^1_i)_{i=0}^n$ and $s^2=(s^2_i)_{i=0}^m$ denote partitions of $[0,t]$. Assume that $s^1$ is such that $\mesh(s^1)\leq \delta$ and that $s^2$ is a refinement of $s^1$. Denote by $s^1_{i*}$ the largest partition element of $s^1$ that is smaller than or equal to $s^2_i$ and note that for all $i$, $|s^2_i-s^1_{i*}|\leq \mesh(s^1)\leq\delta$. In particular for all $i,j$, using Cauchy-Schwarz,
\begin{equation*}
    |\E[(v_{s^2_i} - v_{s^1_{i*}})(v_{s^2_j} - v_{s^1_{j*}})]| = |\Cov(v_{s^2_i} - v_{s^1_{i*}},v_{s^2_j} - v_{s^1_{j*}})|\leq \sqrt{\epsilon^2}=\epsilon.
\end{equation*}
Now
\begin{equation*}
    \sum_{i=0}^{m-1} v_{s^2_i}(W_{s^2_{i+1}}-W_{s^2_i})-\sum_{i=1}^n v_{s^1_i}(W_{s^1_{i+1}}-W_{s^1_i}) = 
    \sum_{i=0}^{m-1} (v_{s^2_i}-v_{s^1_{i*}})(W_{s^2_{i+1}}-W_{s^2_i}).
\end{equation*}
Therefore,
\begin{eqnarray}
    &&\E\left(\sum_{i=0}^{m-1} v_{s^2_i}(W_{s^2_{i+1}}-W_{s^2_i})-\sum_{i=1}^n v_{s^1_i}(W_{s^1_{i+1}}-W_{s^1_i})\right)^2 \nonumber\\
    &=& \sum_{i,j=0}^{m-1} \E \left[(v_{s^2_i}-v_{s^1_{i*}})(W_{s^2_{i+1}}-W_{s^2_i})(v_{s^2_j}-v_{s^1_{j*}})(W_{s^2_{j+1}}-W_{s^2_j})\right].\label{eq:expr1}
\end{eqnarray}
In case $W=\overline N$, using the independence of $v$ and $N$ and the fact that $\overline N$ has increments with expectation $0$, \eqref{eq:expr1} equals
\begin{eqnarray*}
    \sum_{i=0}^{m-1} \E (v_{s^2_i}-v_{s^1_{i*}})^2 \E(W_{s^2_{i+1}}-W_{s^2_i})^2 \leq \sum_{i=0}^{m-1} \epsilon \lambda ({s^2_{i+1}}-{s^2_i}) = \lambda t\epsilon
\end{eqnarray*}
In the case $W_s=s$, \eqref{eq:expr1} equals
\begin{equation*}
    \sum_{i,j=0}^{m-1}\E \left[(v_{s^2_i}-v_{s^1_{i*}})(v_{s^2_j}-v_{s^1_{j*}})\right](s^2_{i+1}-s^2_i)(s^2_{j+1}-s^2_j) \leq \sum_{i,j=0}^{m-1} \epsilon (s^2_{i+1}-s^2_i)(s^2_{j+1}-s^2_j) = \lambda t^2 \epsilon.
\end{equation*}
Set $C=\max(\lambda t,t^2)$. Then in both cases
\begin{equation*}
    \E\left(\sum_{i=0}^{m-1} v_{s^2_i}(W_{s^2_{i+1}}-W_{s^2_i})-\sum_{i=1}^n v_{s^1_i}(W_{s^1_{i+1}}-W_{s^1_i})\right)^2\leq C\epsilon,
\end{equation*}
so the $L^2$ distance between the Riemann sums is smaller than $\sqrt{C\epsilon}$. Now if $s^1$ and $s^2$ are any partitions of $[0,t]$ with mesh smaller than $\delta$ (so if it is not necessarily the case that one is a refinement of the other), let $s^3$ be a refinement of both. Then $\mesh(s^3)\leq\delta$, so the Riemann sum corresponding to $s^3$ is close to the Riemann sums of both $s^1$ and $s^2$. Now the triangle inequality gives the desired result.
\end{proof}

\subsection{Properties of the symmetric part of a Markov generator}

Since we want to compare a Markov generator with its symmetric part, we will introduce this symmetric part and show some of its relevant properties in the next lemma. In particular, the symmetric part is again a Markov generator.
\begin{lemma}\label{lem:symgen}
Let $A$ be a Markov generator with unique ergodic measure $\mu$ and denote by $A^*$ its adjoint in $L^2(\mu)$. Assume that $\mu$ has full support. Then $\sym(A):=(A+A^*)/2$ is also a Markov generator with unique ergodic measure $\mu$. Moreover $\sym(A)$ is reversible with respect to $\mu$.
\end{lemma}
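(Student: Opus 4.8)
The plan is to work in the finite state space $\s$ (the setting in which the lemma is applied), identifying $A$ with its generator matrix and $\mu$ with the row vector $(\mu_i)_{i\in\s}$. First I would compute the adjoint explicitly: since the inner product is $(f,g)=\sum_i \mu_i f_i g_i$, matching coefficients in $(Af,g)=(f,A^*g)$ gives $(A^*)_{ij}=\tfrac{\mu_j}{\mu_i}A_{ji}$ for all $i,j$, where the division is legitimate because $\mu$ has full support. From this formula $A^*$ is recognized as the generator of the time-reversed chain: its off-diagonal entries $\tfrac{\mu_j}{\mu_i}A_{ji}$ are nonnegative, its row sums equal $\tfrac{1}{\mu_i}\sum_j \mu_j A_{ji}=\tfrac{1}{\mu_i}(\mu A)_i=0$ because $\mu A=0$, and the transposed computation shows $\mu A^*=0$, so $\mu$ is stationary for $A^*$ as well.

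Next I would verify that $\sym(A)=(A+A^*)/2$ is a Markov generator. Its off-diagonal entries are $\tfrac12\bigl(A_{ij}+\tfrac{\mu_j}{\mu_i}A_{ji}\bigr)$, a sum of two nonnegative numbers and hence nonnegative; its row sums are averages of the vanishing row sums of $A$ and $A^*$, hence zero; and $\mu\,\sym(A)=\tfrac12(\mu A+\mu A^*)=0$, so $\mu$ is stationary. Reversibility then costs nothing: $\sym(A)^*=\tfrac12(A^*+A)=\sym(A)$, so $\sym(A)$ is self-adjoint in $L^2(\mu)$, which is exactly the definition of reversibility with respect to $\mu$ (equivalently, the entries satisfy the detailed balance relation $\mu_i\,\sym(A)_{ij}=\mu_j\,\sym(A)_{ji}$).

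The only genuine content is the uniqueness of the ergodic measure, and this is where I expect the main obstacle. I would reduce it to irreducibility. For a finite-state generator a full-support stationary distribution is unique precisely when the chain is irreducible, so the hypotheses force the jump graph of $A$ (the directed graph with an edge $i\to j$ whenever $A_{ij}>0$) to be strongly connected. The key observation is that $\sym(A)_{ij}>0$ if and only if $A_{ij}>0$ or $A_{ji}>0$, since $\mu_i,\mu_j>0$ prevent the two summands from cancelling. Hence the jump graph of $\sym(A)$ contains every edge of the jump graph of $A$; as the latter is strongly connected, so is the former, so $\sym(A)$ is irreducible and therefore has a unique stationary and ergodic measure, which we already identified as $\mu$. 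The point to state carefully is exactly this equivalence between uniqueness of the ergodic measure and irreducibility, together with the no-cancellation argument for the edges of $\sym(A)$; everything else is bookkeeping with the explicit entries of $A^*$.
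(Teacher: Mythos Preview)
Your proposal is correct and follows essentially the same route as the paper: both compute the entries of $\sym(A)$ explicitly (you via the intermediate formula $(A^*)_{ij}=\tfrac{\mu_j}{\mu_i}A_{ji}$, the paper directly via $\sym(A)_{ij}=\tfrac{1}{2\mu_i}(\mu_iA_{ij}+\mu_jA_{ji})$), check the generator properties, note self-adjointness for reversibility, and establish uniqueness by observing that the jump graph of $\sym(A)$ contains that of $A$ so irreducibility is inherited. The only cosmetic difference is that you first identify $A^*$ as the time-reversed generator before averaging, whereas the paper averages first.
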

\begin{proof}
Denote by $e^i$ the $i^{\text{th}}$ unit vector. Then for every matrix $B$
\begin{equation*}
    (e^i,Be^j) = \sum_{k=1}^ne^i_k (Be^j)_k \mu_k = B_{ij}\mu_k. 
\end{equation*}
Therefore
\begin{equation}\label{eq:symAind}
    \sym(A)_{ij} = \frac{1}{\mu_i} (e^i,\sym(A)e^j) = \frac{1}{2\mu_i}((e^i,Ae^j)+(e^j,Ae^i)) = \frac{1}{2\mu_i}(\mu_i A_{ij}+\mu_jA_{ji}).
\end{equation}
Since $A_{ij}\geq 0$ for all $i\neq j$, \eqref{eq:symAind} implies that $\sym(A)_{ij}\geq 0$ for $i\neq j$. Also, setting $i=j$, we obtain $\sym(A)_{ii}=A_{ii}\leq 0$.

Since $\mu$ is a stationary distribution for $A$, we know that for all $v$,
\begin{equation*}
    (A^*\1,v) = (\1,Av) = \int Av\dd \mu = 0,
\end{equation*}
so $A^*\1=0$. Therefore
\begin{equation*}
    \sym(A)\1 = \frac{1}{2}(A\1 + A^*\1) = 0.
\end{equation*}
We conclude (so far) that $\sym(A)$ has negative diagonal elements, positive off-diagonal elements and $0$ row sums, so $\sym(A)$ is a Markov generator.

By construction, $\sym(A)$ is self-adjoint in $L^2(\mu)$, so $\sym(A)$ is reversible with respect to $\mu$ and in particular $\mu$ is stationary for $\sym(A)$.

Finally, since $A$ has a unique ergodic measure with full support, (the Markov processes generated by) it is irreducible. Since (by~\eqref{eq:symAind}) for all $i,j$, $A_{ij}\neq 0 \implies \sym(A)_{ij}\neq 0$, this implies that also (the Markov process generated by) $\sym(A)$ is irreducible. Therefore it can have at most one invariant measure, which implies that $\mu$ is the unique invariant measure and hence the unique ergodic measure.
\end{proof}

\section*{Acknowledgement}
The authors thank Richard Kraaij and Mikola Schlottke for helpful discussions.
The support of the grant 613.009.112 of the Netherlands Organisation for Scientific Research (NWO) is gratefully acknowledged.

%-----------------------------------------------------------------
%now the real content is over
\phantomsection
\bibliographystyle{unsrtnat}
\bibliography{refs}

\end{document}